\DeclareSymbolFont{cyss}{OT2}{wncyss}{m}{n}
\DeclareSymbolFont{cyr}{OT2}{wncyr}{m}{n}
\DeclareMathSymbol{\sh}{\mathbin}{cyss}{`x}
\definecolor{gray}{gray}{0.5}
\newcommand{\End}{\operatorname{End}}
\newcommand{\ad}{\operatorname{ad}}
\newcommand{\PGL}{\operatorname{PGL}}
\newcommand{\Lie}{\operatorname{Lie}}
\newcommand{\Li}{\operatorname{Li}}
\newcommand{\reg}{\operatorname{reg}}
\newcommand{\id}{\operatorname{id}}
\newcommand{\sgn}{\operatorname{sgn}}
\newcommand{\Image}{\operatorname{Im}}
\newcommand{\C}{{\mathbf C}}
\newcommand{\R}{{\mathbf R}}
\newcommand{\Q}{{\mathbf Q}}
\newcommand{\Z}{{\mathbf Z}}
\newcommand{\F}{{\mathbf F}}
\newcommand{\bP}{{\mathbf P}}
\newcommand{\bunit}{{\mathbf I}}
\newcommand{\bnull}{{\mathbf 1}}
\newcommand{\fX}{{\mathfrak X}}
\newcommand{\fM}{{\mathfrak M}}
\newcommand{\fA}{{\mathfrak A}}
\newcommand{\cB}{{\mathcal B}}
\newcommand{\cL}{{\mathcal L}}
\newcommand{\cM}{{\mathcal M}}
\newcommand{\cU}{{\mathcal U}}
\newcommand{\tcU}{{\widetilde{\mathcal U}}}
\newcommand{\cW}{{\mathcal W}}
\newcommand{\hcL}{{\widehat{\mathcal L}}}
\newcommand{\tcL}{{\widetilde{\mathcal L}}}
\newcommand{\ds}{\displaystyle}
\theoremstyle{definition}
\newtheorem{thm}{Theorem}
\newtheorem{prop}[thm]{Proposition}
\newtheorem{lem}[thm]{Lemma}
\def\theenumi{\arabic{enumi}}
\def\labelenumi{(\theenumi)}
\title{\huge Connection Problem of Knizhnik-Zamolodchikov Equation on Moduli Space $\cM_{0,5}$\\[0.5cm]}
\author{OI, Shu\thanks{Department of Mathematics, School of Fundamental Sciences and 
Engineering, Faculty of Science and Engineering, Waseda University. \endgraf \hspace{1em}3-4-1, Okubo, Shinjuku-ku, 
Tokyo 169-8555, Japan. \endgraf \hspace{1em}e-mail: {\tt shu\_oi@toki.waseda.jp}}
\and UENO, Kimio\thanks{Department of Mathematics, School of Fundamental Sciences 
and Engineering, Faculty of Science and Engineering, Waseda University. \endgraf \hspace{1em}3-4-1, Okubo, Shinjuku-ku, 
Tokyo 169-8555, Japan. \endgraf \hspace{1em}e-mail: {\tt uenoki@waseda.jp}
}}
\date{}
\begin{document}


\maketitle

\begin{abstract}
\noindent 
In this article, we consider the connection problem of the KZ (Knizhnik-Zamolodchikov) equation on the 
moduli space $\cM_{0,5}$, and show that the connection matrices are expressed in terms of the Drinfel'd 
associator. As the compatibility condition on the connection problem, we have the pentagon relation 
for the Drinfeld associators. As an application of the connection problem, we derive the five term 
relation for dilogarithms. 
\end{abstract}

\section{Introduction}

In this article, we consider the connection problem of the KZ equation on the 
moduli space $\cM_{0,5}$, and show that the connection matrices are expressed in terms of the Drinfel'd 
associators. As the compatibility condition on the connection problem, we have the pentagon relation 
for the Drinfeld associators. This story was already proved by Drinfel'd in his famous paper \cite{Dr} 
(see also Wojtkowiak's paper \cite{W}), but our proof is completely different from theirs. Our proof is 
based upon the decomposition theorem for fundamental solutions of the KZ equation. 
As an application of the connection problem, we derive the five term 
relation for dilogarithms. 

This paper is organized as follows: In section 2, we introduce the KZ equation on the moduli space 
$\cM_{0,n}$. In the cubic coordinate system of the space, it is represented as the KZ equation of 
$n-3$ variables. In section 3, we particularly consider the KZ equation of one variable which is denoted
by $z$, and show that the Drinfel'd associator $\varPhi_{\rm KZ}$ gives the connection matrix between $\cL(z)$, 
which is the fundamental solution of normalized at $z=0$ of the equation, and $\cL^{(1)}(z)$, 
which is the fundamental solution normalized at $z=1$ of the equation; 
\begin{align}\label{eq:connection}
          \cL(z)=\cL^{(1)}(z)\varPhi_{\rm KZ}.
\end{align}
\noindent 
In section 4, the KZ equation of two variables, which are denoted by $(z_1,z_2)$, will be considered. 
The fundamental solution $\cL(z_1,z_2)$ normalized at $(z_1,z_2)=(0,0)$ of the equation is shown to decompose 
as follows: 
\begin{align}\label{decomposition}
     \cL(z_1,z_2) &= \cL_{2 \otimes 1}^{(2)}(z_1,z_2)\cL_{2 \otimes 1}^{(1)}(z_1) \\
                  &= \cL_{1 \otimes 2}^{(1)}(z_1,z_2)\cL_{1 \otimes 2}^{(2)}(z_2) \nonumber 
\end{align}
where $\cL_{i \otimes j}^{(j)}(z_j)$ is the fundamental solution normalized at $z_j=0$ of the KZ  equation of the one variable $z_j$,
and $\cL_{i \otimes j}^{(j)}(z_1,z_2)$ is the fundamental solution normalized at $z_i=0$
of the Schlesinger type equation of the variable $z_i$. 
We will refer to \eqref{decomposition} as ``the decomposition theorem''. In section 5, we will consider the connection 
problem of solutions of the KZ equation on $\cM_{0,5}$. We fix a pentagon in $\overline{\cM}_{0,5}(\R)$, 
which denotes the real points of a certain compactification $\overline{\cM}_{0,5}$ of $\cM_{0,5}$, 
as below: 

\hspace{20mm}
\begin{picture}(0,4.5)(0,-0.5)
\footnotesize
\put(0,0){\scalebox{0.5}{\includegraphics{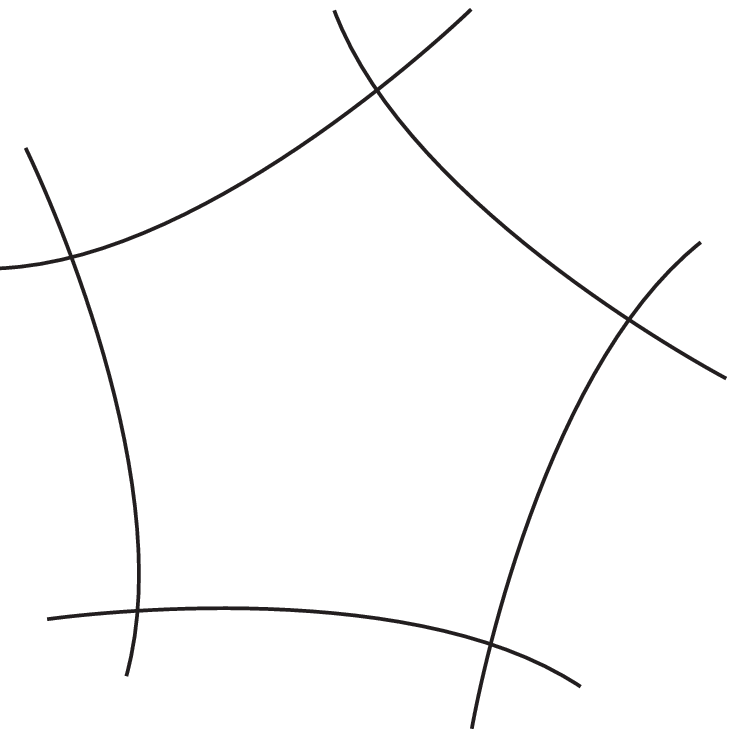}}}
\put(1.2,0.3){$D_{23}$}
\put(2.7,1.0){$D_{14}$}
\put(2.4,2.7){$D_{35}$}
\put(0.7,3.0){$D_{12}$}
\put(-0.1,1.4){$D_{45}$}
\put(0.7,0.7){$P_0$}
\put(2.1,0.6){$P_1$}
\put(2.6,2.0){$P_2$}
\put(1.6,2.8){$P_3$}
\put(0.5,2.1){$P_4$}
\end{picture}

\noindent
To each vertex $P_{\alpha}$, we associate the cubic coordinate $(z_1^{(\alpha)},z_2^{(\alpha)})$, and 
the KZ equation of the two variable $(z_1^{(\alpha)},z_2^{(\alpha)})$. Let $\cL^{(\alpha)}$ be 
the fundamental solution normalized at the vertex $P_{\alpha}$ of the equation. Then one can show that 
\begin{align}\label{eq:connection_pentagon}
    \cL^{(\alpha)}=\cL^{(\alpha+1)}\varPhi^{(\alpha)}_{\rm KZ} \quad (\alpha=0,1,2,3,4 \pmod{5})
\end{align}
where $\varPhi^{(\alpha)}_{\rm KZ}$ is the Drinfel'd associator attached to the vertex $P_{\alpha}$. 
The relation \eqref{eq:connection_pentagon} is proved by applying the decomposition theorem 
\eqref{decomposition}. 
As the compatibility condition for $\eqref{eq:connection_pentagon}$, we obtain the pentagon 
relation of the Drinfel'd associators
\begin{align}\label{eq:pentagon_relation}
\varPhi^{(4)}_{\rm KZ}\;\varPhi^{(3)}_{\rm KZ}\;
\varPhi^{(2)}_{\rm KZ}\;\varPhi^{(1)}_{\rm KZ}\;\varPhi^{(0)}_{\rm KZ}=\bunit
\end{align}

In section 6, as an application of the connection problem, we derive the five term relation for dilogarithms 
\begin{align}\label{eq:5_term_relation}
\Li_2(z_1z_2)= \Li_2 & \left(\frac{-z_1(1-z_2)}{1-z_1}\right)  + 
\Li_2\left(\frac{-z_2(1-z_1)}{1-z_2}\right)  \nonumber \\
& + \Li_2(z_1)+\Li_2(z_2)+\frac{1}{2}\log^2\left(\frac{1-z_1}{1-z_2}\right). 
\end{align}

In \cite{OU}, we show that the decomposition theorem $\eqref{decomposition}$ is equivalent to 
the generalized harmonic product relations of hyperlogarithms of the type $\cM_{0,5}$, which 
contain the harmonic product of multiple polylogarithms.

\paragraph{Acknowledgment}
\noindent 
The first author is partially supported by Waseda University Grant for Special
Research Projects No. 2010B-200, 2011B-095.
The second author is partially supported by JSPS Grant-in-Aid No. 19540056, 22540035.

\section{The KZ equation on the moduli space $\cM_{0,n}$}
\subsection{Definition of the moduli space and the KZ equation}

The moduli space $\cM_{0,n}$ is by definition  \cite{Y}
\begin{align}
    \cM_{0,n}=\PGL(2,\C) \; \Big\backslash \; \F_n(\bP^1).
\end{align}
Here $\F_n(\bP^1)$ is the configuration space of $n$ points on $\bP^1=\bP^1(\C)$, 
\begin{align*}
    \F_n(\bP^1)&=\{(x_1,\ldots,x_n) \in (\bP^1)^n \;|\; x_i\neq x_j\;\; (i\neq j)\}\\
&=(\bP^1)^n \setminus \Delta
\end{align*}
where $\Delta$ is the thick diagonal of $(\bP^1)^n$, 
\begin{align*}
    \Delta&=\{(x_1,\ldots,x_n) \in (\bP^1)^n \; | \; \exists i\neq j \text{ s.t. } x_i=x_j\}.
\end{align*}
The KZ equation(KZE) on $\cM_{0,n}$ is a Pfaffian system defined by 
\begin{align}\tag{KZE}\label{KZE}
dG=\varOmega G,  \quad \varOmega = \sum_{1\le i<j\le n}\omega_{ij}\varOmega_{ij}, \quad \omega_{ij}=d\log(x_i-x_j).
\end{align}
Here $\{\varOmega_{ij}\}$ are generators of the Lie algebra $\fX$ of the fundamental group of 
$\pi_1(\cM_{0,n})$, 
\begin{equation*}
\fX=\Lie(\pi_1(\cM_{0,n})).
\end{equation*}
This Lie algebra is defined through the lower central series of $\pi_1(\cM_{0,n})$, 
\begin{align*}
     \pi_1(\cM_{0,n})=G_1\supset G_2 \supset G_3 \supset \cdots.
\end{align*}
We set
\begin{align*}
     \fX=\bigoplus_{j=1}^\infty \C \; G_j/G_{j+1}.
\end{align*}
We should note that the fundamental group $\pi_1(\F_n(\bP^1))$ of the configuration space $\F_n(\bP^1)$ 
is isomorphic to 
\begin{align*}
   \pi_1(\F_n(\bP^1)) \cong \pi_1(\cM_{0,n}) \times \Z \slash 2\Z
\end{align*}
so that $\Lie(\pi_1(\F_n(\bP^1)))$ is isomorphic to $\Lie(\pi_1(\cM_{0,n}))$. 

The generators $\{\varOmega_{ij}\}_{1\le i,j\le n}$ of $\fX$ satisfy the following relations \cite{I}:
\begin{align}
&\varOmega_{ij}=\varOmega_{ji}, \quad \varOmega_{ii}=0, \tag{IH1}\label{IH1}\\
&\sum_{j=1}^n \varOmega_{ij}=0 \quad (\forall i ), \tag{IH2}\label{IH2}\\
&[\varOmega_{ij},\varOmega_{kl}]=0 \quad (\{i,j\} \cap \{k,l\} = \emptyset). \tag{IH3}\label{IH3}
\end{align}
These relations are referred to as the Ihara relations. $\fX$ is a Lie algebra generated 
by $\{\varOmega_{ij}\}_{1\le i,j\le n}$ with the fundamental relations \eqref{IH1} $\sim$ \eqref{IH3}.
The relation \eqref{IH1} is rather convention of notations. From \eqref{IH2} and \eqref{IH3}, we have 
\begin{align}\label{IH4}
      [\varOmega_{ij}+\varOmega_{jk},\varOmega_{ik}]=0    \quad (\#\{i,j,k\}=3).
\end{align}

The one forms $\{\omega_{ij}\}_{1\le i<j\le n}$ satisfy the Arnold relations \cite{A}:
\begin{align}\tag{AR}\label{AR}
      \omega_{ij} \wedge \omega_{ik} + \omega_{ik} \wedge \omega_{jk} + 
      \omega_{jk} \wedge \omega_{ij} = 0
\end{align}
for $i<j<k$. It is known that $\eqref{AR}$ are unique  nontrivial relations of degree 2 
which hold among these one forms. 

From the Ihara relations \eqref{IH4} and the Arnold relations \eqref{AR}, 
we can deduce that 
\def\theenumi{\alph{enumi}}
\def\labelenumi{(\theenumi)}
\begin{enumerate}
\item the system \eqref{KZE} is regular as $x_i=\infty,\; |x_j|<\infty\;(j\ne i)$.
\item the system \eqref{KZE} is integrable, namely $\varOmega$ satisfy
\begin{align*}
     \varOmega \wedge \varOmega=0.
\end{align*}
\item the system \eqref{KZE} is $\PGL(2, \C)$ invariant.
\end{enumerate}
\def\theenumi{\arabic{enumi}}
\def\labelenumi{(\theenumi)}

\noindent 
Hence the system \eqref{KZE} can be regarded as an integrable system on $\cM_{0,n}$.

\subsection{Cubic coordinate system on $\cM_{0,n}$ and KZ equation of $n-3$ variables}

Let $\ds r(i,j;k,l)=\frac{(x_i-x_k)(x_j-x_l)}{(x_i-x_l)(x_j-x_k)}$ be a cross ratio. 
Put
\begin{align}
    y_i=r(i,n-1;n-2,n)=\frac{x_i-x_{n-2}}{x_i-x_n}\frac{x_{n-1}-x_{n}}{x_{n-1}-x_{n-2}} 
                                                \in \bP^1\setminus\{0,1,\infty\}
\end{align}
for $i=1,\ldots,n-3$. This defines a coordinate system which we call the simplicial coordinate system on 
$\cM_{0,n}$. Now we introduce $(z_1,\ldots,z_{n-3})$ by
\begin{align}
       &y_i=z_1\cdots z_i \quad (1 \leq i \leq n-3)\\
             \Longleftrightarrow & 
          \begin{cases}
          z_1=y_1=r(1,n-1;n-2,n),\\
          z_2= \dfrac{y_2}{y_1}=r(2,1;n-2,n),\\
             \vdots\\
          z_{n-3}=\dfrac{y_{n-3}}{y_{n-4}}=r(n-3,n-4;n-2,n).
          \end{cases}
\end{align}
This is also a coordinate system on $\cM_{0,n}$ which we call the cubic coordinate system. These two 
coordinate system were introduced by F. Brown in \cite{B}.

In the cubic coordinate system on $\cM_{0,n}$, \eqref{KZE} is represented as follows:
\begin{align}\label{eq:KZ_n-3}
    & dG= \varOmega G, \nonumber \\
    & \varOmega= \sum_{k=1}^{n-3}\xi_kX_k+\sum_{k=1}^{n-3}\xi_{kk}X_{kk} +\sum_{1\le i<j\le n-3}\xi_{ij}X_{ij},
\end{align}
where
\begin{align}
        \xi_k=\frac{dz_k}{z_k},\quad \xi_{kk}=\frac{dz_k}{1-z_k},
        \quad \xi_{ij}=\frac{d(z_i\cdots z_j)}{1-(z_i\cdots z_j)} \quad (i<j)
\end{align}
and
\begin{align*}
    & X_k=\sum_{k\le i<j\le n-2}\varOmega_{ij} \quad (1\le k \le n-3),\quad 
      X_{11}=-\varOmega_{1,n-1},\quad X_{kk}=-\varOmega_{k-1,k},\\
    & X_{1j}=-\varOmega_{j,n-1} \quad (2\le j \le n-3),\quad X_{ij} =-\varOmega_{i-1,j} \quad (2\le i<j\le n-3).
\end{align*}
We refer to this system as the KZ equation of $n-3$ variables.

\section{KZE of one variable}
\subsection{Definition of KZE of one variable}

By the cubic coordinate system $\ds z=r(1,3;2,4)=\frac{(x_1-x_2)}{(x_1-x_4)}\frac{(x_3-x_4)}{(x_3-x_2)}$, 
the moduli space $\cM_{0,4}$ is homeomorphic to $\bP^1 \setminus \{0,1,\infty\}$;
\begin{align*}
 \cM_{0,4}\overset{\sim}{\longrightarrow} \bP^1\setminus\{0,1,\infty\} \ : \ 
 [x_1,x_2,x_3,x_4] \mapsto  z
\end{align*}
where $[x_1,\dots,x_4]$ stands for the $\PGL(2, \C)$ orbit of $(x_1,\ldots,x_4)$.  
The KZ equation on $\cM_{0,4}$ is represented as
\begin{align}\tag{KZE1}\label{KZE1}
\frac{dG}{dz}=\left(\frac{X_0}{z}+\frac{X_1}{1-z}\right)G
\end{align}
where $X_0=\varOmega_{12},\; X_1=-\varOmega_{13}$. This is the KZ equation of one variable.

In this case, $\fX$ is a free Lie algebra generated by $X_0, X_1$, which we denote by 
\begin{align}
           \fX=\C\{X_0,X_1\},
\end{align}
and the universal enveloping algebra $\cU(\fX)$ is a non-commutative polynomial algebra of $X_0,X_1$; 
\begin{align}
         \cU=\cU(\fX)=\C\langle X_0,X_1\rangle. 
\end{align}

\subsection{Shuffle algebra of one forms}

Put
\begin{align}
         \xi_0=\frac{dz}{z},\quad \xi_1=\frac{dz}{1-z}.
\end{align}
They satisfy the Arnold relation $\xi_0 \wedge \xi_1=0$. We set
\begin{align}
       S=S(\xi_0,\xi_1)=\big(\C\langle\xi_0,\xi_1\rangle, \sh, \bnull\big)
\end{align}
where $\bnull$ stands for the empty word in S and $\sh$ denotes the shuffle product, 
which is recursively defined by
\begin{gather}
w \sh \bnull = \bnull \sh w = w,\\
(\xi_{i_1} w_1) \sh (\xi_{i_2} w_2)=\xi_{i_1} (w_1 \sh (\xi_{i_2} w_2)) + \xi_{i_2} ((\xi_{i_1} w_1) \sh w_2).
\end{gather}
$S$ is an associative commutative algebra with the unit elemnt $\bnull$. This is called a shuffle algebra
generated by $\xi_0, \xi_1$.

$\cU$ is a non-commutative and cocomutative Hopf algebra, and $S$ is a commutative Hopf algebra. 
As Hopf algebras, $\cU$ and $S$ are dual to each other.

To a word $w=\xi_{i_1}\cdots\xi_{i_r} \in S$, we can associate an iterated integral 
$\ds \int_a^b w$ as follows \cite{C1,Ha}:
\begin{align}
             \int_a^b \bnull=1,\qquad
             \int_a^b w=\int_a^b \xi_{i_1}(t)\int_a^t \xi_{i_2}\cdots\xi_{i_r}.
\end{align}
Then we have
\begin{align}
        \int_a^b w_1\int_a^b w_2=\int_a^b (w_1\sh w_2).
\end{align}
For $w=\xi_0^{k_1-1}\xi_1\cdots\xi_0^{k_r-1}\xi_1$, we can put an iterated integral $\ds \int_0^z w$, 
which defines the multiple polylogarithm of one variable (MPL):
\begin{align}
          \Li_{k_1,\ldots,k_r}(z):=\int_0^z\xi_0^{k_1-1}\xi_1\cdots\xi_0^{k_r-1}\xi_1.
\end{align}
This is a many-valued function on $\cM_{0,4}$ and has a Taylor expansion
\begin{align}
\Li_{k_1,\ldots,k_r}(z)=\sum_{n_1>n_2\cdots>n_r>0}\frac{z^{n_1}}{n_1^{k_1}\cdots n_r^{k_r}} 
                                            \quad (|z|<1).
\end{align}
In particular if $k_1\ge 2$, we can define an iterated integral 
$\ds \int_0^1\xi_0^{k_1-1}\xi_1\cdots\xi_0^{k_r-1}\xi_1$. This is the multiple zeta value (MZV):
\begin{align}
            \zeta(k_1,\ldots,k_r)=\int_0^1\xi_0^{k_1-1}\xi_1\cdots\xi_0^{k_r-1}\xi_1
                =\sum_{n_1>n_2\cdots>n_r>0}\frac{1}{n_1^{k_1}\cdots n_r^{k_r}}.
\end{align}

Let us introduce shuffle subalgebras such as 
\begin{align}
         S^0=\C\bnull\oplus S\xi_1, \quad 
         S^{10}=\C\bnull\oplus \xi_0S\xi_1.
\end{align}
$S^0$ is a subalgebra spanned by words ending with other than $\xi_0$, 
and $S^{10}$ is a subalgebra spanned by words beginning with other than $\xi_1$ 
and ending with other than $\xi_0$.

\begin{prop}[\cite{R}]\label{prop:reutanuer}
We have
\begin{align}\label{eq:reutanuer}
           S=S^0[\xi_0]=S^{10}[\xi_1,\xi_0].
\end{align}
\end{prop}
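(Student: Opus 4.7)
My plan is to exhibit, in each length degree, a triangular change of basis between the word basis of $S$ and a ``shuffle-monomial'' basis adapted to the proposed polynomial presentation, and then to conclude by a dimension count. This is a direct instance of Radford's theorem for the two-letter alphabet $\{\xi_0,\xi_1\}$; I would handle the two identities separately.

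For $S = S^0[\xi_0]$, every word $w \in S$ admits a unique decomposition $w = u\,\xi_0^k$ with $u \in S^0$ and $k = \tau(w) \geq 0$ the number of trailing $\xi_0$'s. The key computation is to expand
\[
u \sh \xi_0^{\sh k} = k!\, u\xi_0^k \;+\; \sum_{\tau(w') < k} c_{w'}\, w'
\]
with non-negative integer coefficients $c_{w'}$: the only interleaving of $u$ and $\xi_0^k$ that yields the word $u\xi_0^k$ places all $k$ copies of $\xi_0$ after the final $\xi_1$ of $u$ (the case $u=\bnull$ being trivial), while every other interleaving inserts at least one $\xi_0$ before that final $\xi_1$, strictly reducing the trailing $\xi_0$-count. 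Inducting on $\tau(w)$, every word becomes a shuffle-polynomial in $\xi_0$ with coefficients in $S^0$, so the natural map $S^0 \otimes \C[\xi_0] \to S$ is surjective. In each length degree $n$ one has $\dim S_n = 2^n$ and $\dim (S^0[\xi_0])_n = \sum_{m=0}^n \dim S^0_m = 1 + \sum_{m=1}^n 2^{m-1} = 2^n$; since the above change-of-basis matrix is triangular with nonzero diagonal entries $k!$, this surjection is in fact an isomorphism.

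For $S = S^{10}[\xi_1,\xi_0]$, I would first establish the intermediate identity $S^0 = S^{10}[\xi_1]$ and then iterate: $S = S^0[\xi_0] = S^{10}[\xi_1][\xi_0] = S^{10}[\xi_1,\xi_0]$. Every word in $S^0$ has a unique presentation $\xi_1^\lambda v$ with $v \in S^{10}$ and $\lambda \geq 0$, and a symmetric computation yields $v \sh \xi_1^{\sh \lambda} = \lambda!\,\xi_1^\lambda v$ plus terms of $S^0$ with smaller leading $\xi_1$-count, after which the same triangularity and matching dimension count in each length degree give $S^0 = S^{10}[\xi_1]$. The only delicate step in either identity is verifying the leading-term identity and that all correction terms are strictly lower in the relevant ordering; this uses in an essential way that a non-empty element of $S^0$ (resp.\ $S^{10}$) ends in $\xi_1$ (resp.\ starts with $\xi_0$), so that no shuffled-in letter can hide at the appropriate end to match the prescribed count. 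Once this is done, both generation and algebraic independence follow from the same triangular matrix in each length degree.
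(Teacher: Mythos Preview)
Your argument is correct, and there is nothing to compare it against: the paper does not prove this proposition at all but simply records it with a reference to Reutenauer~[R]. What you have written is exactly the standard Radford-type argument for the two-letter case --- the triangular expansion of $u \sh \xi_0^{\sh k}$ with respect to the trailing-$\xi_0$ filtration, followed by the matching dimension count $\sum_{m=0}^n \dim S^0_m = 2^n$, and then the mirror argument at the left end for $S^0 = S^{10}[\xi_1]$. Both the spanning step and the independence step are handled cleanly by the same triangular matrix, as you note; nothing is missing.
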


Let us define the regularization maps as follows: 
\begin{align}
      & \reg^0: S=S^0[\xi_0] \to S^0, \quad u=\sum w_j\sh \xi_0^j \mapsto w_0 \ (w_j \in S^0) \\
      & \reg^{10}: S=S^{10}[\xi_1,\xi_0] \to S^{10}, \quad 
         u=\sum \xi_1^i\sh w_{ij}\sh \xi_0^j \mapsto w_{00} \ (w_{ij} \in S^{10}).
\end{align}
From Proposition \ref{prop:reutanuer}, these maps turn out to be well-defined and homomorphisms.
Furthermore we can also define the MPL map and the MZV map as follows:
\begin{itemize}
\item MPL map $\Li(\bullet;z): S \to \C$: \ For $u=\sum w_j\sh \xi_0^j \quad (w_j \in S^0)$, we put 
\begin{align}
              \Li(u;z):=\sum\Li(w_j;z)\frac{\log^j z}{j!}.
\end{align}
Here for a word $w=\xi_0^{k_1-1}\xi_1\cdots\xi_0^{k_r-1}\xi_1 \in S^0$, $\Li(w;z)$ is 
the MPL $\Li_{k_1,\ldots,k_r}(z)$.
\item MZV map $\zeta: S^{10} \to \C$: \ For $w=\xi_0^{k_1-1}\xi_1\cdots\xi_0^{k_r-1}\xi_1 \in S^{10}$
(i.e. $k_1\ge 2$), we put
\begin{align}
      \zeta(w)=\zeta(k_1,\ldots,k_r).
\end{align}
\end{itemize}

\subsection{Fundamental solutions of KZE1}

\begin{prop}\label{prop:KZE1}
The equation \ref{KZE1} $\ds \frac{dG}{dz}=\Big(\frac{X_0}{z}+\frac{X_1}{1-z}\Big)G$ 
has a unique solution $\cL(z)$ satisfying the following asymptotic condition;
\begin{align*}
     & \cL(z)=\hcL(z)z^{X_0}, \\
     & \hcL(z) \ \text{is holomorphic at $z=0$ and $\hcL(0)=\bunit$}.
\end{align*}
\end{prop}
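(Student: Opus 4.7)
The plan is to perform a standard Frobenius-type reduction at the regular singular point $z=0$. Substituting the ansatz $\cL(z)=\hcL(z)z^{X_0}$ into \eqref{KZE1} and using $(z^{X_0})'=(X_0/z)\,z^{X_0}$, I right-multiply by $z^{-X_0}$ to obtain an equivalent equation for $\hcL$,
\[
z\,\hcL'(z) \;=\; [X_0,\hcL(z)] \;+\; \frac{z\,X_1}{1-z}\,\hcL(z).
\]
The $1/z$ singularity arising from $X_0/z$ has been absorbed into the commutator, so the right-hand side vanishes to order $z$; the equation for $\hcL$ is now formally regular at the origin.

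Next I expand $\hcL(z)=\sum_{n\ge 0}\hcL_n\,z^n$ as a formal power series with coefficients in the completed universal enveloping algebra $\widehat{\cU}$ and match coefficients of $z^n$. Since $\frac{z}{1-z}=\sum_{m\ge 1}z^m$, this yields
\[
(n\cdot\id-\ad X_0)(\hcL_n) \;=\; X_1\sum_{k=0}^{n-1}\hcL_k \qquad (n\ge 0),
\]
and the normalization $\hcL(0)=\bunit$ forces $\hcL_0=\bunit$, which makes the $n=0$ case trivial. For $n\ge 1$ one has to invert $n\cdot\id-\ad X_0$; since $\ad X_0$ raises the word-length grading of $\cU$ by one, I decompose $\hcL_n=\sum_d \hcL_{n,d}$ by word-length and solve the triangular recursion
\[
n\,\hcL_{n,d} \;=\; [X_0,\hcL_{n,d-1}] \;+\; X_1\sum_{k=0}^{n-1}\hcL_{k,d-1},
\]
with base case $\hcL_{n,0}=0$ for $n\ge 1$ (the $d=0$ component of the recursion). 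This determines $\{\hcL_{n,d}\}$ uniquely by induction on $d$ and gives a unique formal-series solution.

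Finally I upgrade this formal series to a genuine holomorphic $\widehat{\cU}$-valued function on a neighbourhood of $z=0$. Tracking the recursion, the coefficient of any fixed word in $\hcL(z)$ is a $\C$-linear combination of iterated integrals of $\xi_0,\xi_1$ over $[0,z]$ ending in $\xi_1$, i.e.\ of the multiple polylogarithms $\Li_{k_1,\ldots,k_r}(z)$ introduced in Section~3.2; these are given by convergent Taylor series on $|z|<1$, so $\hcL(z)$ is holomorphic on the unit disk and $\hcL(0)=\bunit$. Uniqueness of $\cL(z)$ in the prescribed form follows from uniqueness of the formal solution to the reduced equation. The main obstacle I anticipate is keeping the non-commutativity of $X_0,X_1$ bookkept consistently through the reduction and the recursion, and verifying cleanly that the coefficients really are the MPLs of Section~3.2 rather than merely abstract formal series, so that genuine holomorphy at $z=0$ can be asserted.
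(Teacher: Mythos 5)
Your argument is correct, and it reaches the same destination as the paper by a genuinely different (more classical) route. The paper's primary induction is on the word-length grading of $\cU$: writing $\hcL=\sum_s\hcL_s$ with $\hcL_s\in\cU_s$, it derives the differential recursion $\hcL_{s+1}'=\tfrac1z[X_0,\hcL_s]+\tfrac1{1-z}X_1\hcL_s$ and solves it in closed form by iterated integration, so that each $\hcL_s$ is exhibited at once as an explicit $\cU_s$-valued combination of the $\Li_{k_1,\dots,k_r}(z)$ with $k_1+\cdots+k_r=s$; existence, uniqueness and holomorphy all drop out of that formula. You instead run a Frobenius expansion in powers of $z$, arrive at the indicial-type equations $(n\cdot\id-\ad X_0)(\hcL_n)=X_1\sum_{k<n}\hcL_k$, and invert $n\cdot\id-\ad X_0$ by a secondary induction on word length. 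What your route buys is that it makes visible exactly why the normalization works: the operator $n\cdot\id-\ad X_0$ is invertible on the degree-completion for every $n\ge1$ because $\ad X_0$ is strictly grading-raising (equivalently, there are no resonances), whereas at $n=0$ the kernel is nontrivial and the condition $\hcL(0)=\bunit$ is precisely what pins down $\hcL_0$. The paper's route buys the explicit MPL formula \eqref{eq:MPLs0}, which is then reused throughout Section~3.

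The one step you assert rather than prove is the last one: that the coefficient of each fixed word in $\hcL(z)$ is a finite linear combination of MPLs, hence holomorphic on $|z|<1$. This does not follow from the formal recursion alone and needs a sentence. The cleanest way to close it inside your framework is to resum your double recursion at fixed word length $d$: setting $\hcL^{(d)}(z)=\sum_n\hcL_{n,d}z^n$, the relations $n\,\hcL_{n,d}=[X_0,\hcL_{n,d-1}]+X_1\sum_{k<n}\hcL_{k,d-1}$ are exactly the Taylor coefficients of $\frac{d}{dz}\hcL^{(d)}=\frac1z[X_0,\hcL^{(d-1)}]+\frac1{1-z}X_1\hcL^{(d-1)}$, which is the paper's recursion; integrating from $0$ (legitimate since $[X_0,\hcL^{(d-1)}]$ vanishes at $z=0$) yields the iterated-integral expression and hence the MPL identification. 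Alternatively, a direct estimate on your recursion shows the scalar coefficients $c_n(W)$ of a fixed word $W$ of length $d$ satisfy a bound of the form $|c_n(W)|\le C_d$ uniformly in $n$, which already gives radius of convergence $\ge1$ without invoking MPLs at all. Either patch completes the proof.
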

\begin{proof}
Let us express $\hcL(z)$ such as $\hcL(z)=\sum_{s=0}^{\infty} \hcL_s(z)$ where 
$\hcL_s(z) \in \cU_s$ is the homogeneous component of the degree $s$. Then it is easy to show that  
$\hcL_s(z)$ satisfies the following recursive relation;
\begin{equation}
        \frac{d\hcL_{s+1}(z)}{dz}=\frac{1}{z}[X_0,\hcL_s(z)]+\frac{1}{1-z}X_1\hcL_s(z).
\end{equation}
Hence we have $\hcL_0(z)=\bunit$ and for $s\ge 1$,
\begin{align}\label{eq:MPLs0}
     \hcL_s(z) &=\int_0^z \Big(\xi_0\otimes \ad(X_0)+\xi_1\otimes\mu(X_1)\Big)^s(\bnull\otimes\bunit)
                                            \nonumber \\
               &=\!\!\!\sum_{k_1+\cdots+k_r=s}\!\!\!\Li_{k_1,\ldots,k_r}(z)\;\ad(X_0)^{k_1-1}\mu(X_1)
                              \cdots\ad(X_0)^{k_r-1}\mu(X_1)(\bunit),
\end{align}
where $\ad(X_0)(A)=[X_0,A],\;  \mu(X_1)(A)=X_1A$ for $A \in \cU$.
Hence $\cL(z)$ exists and is unique.
\end{proof}

We call $\cL(z)$ a fundamental solution normalized at $z=0$ of \eqref{KZE1}.
We should observe that $\cL(z)$ and $\hcL(z)$ are grouplike elements of 
$\tcU=\C\langle\langle X_0,X_1\rangle\rangle$ (a non-commutative formal power series ring 
of $X_0,X_1$), and that $\hcL(z)$ is single valued and holomorphic in $D=\C \setminus \{z=x\,|\, 1 \le x\}$. 

Let us find out another representation of $\cL(z)$. We set
\begin{align*}
        G(z)=\sum_{w \in S}\Li(w;z)W
\end{align*}
where in the sum of the right hand side, $w$ ranges over the set of words in $S$, and 
for $w=\xi_{i_1}\cdots\xi_{i_r}$, we put $W=X_{i_1}\cdots X_{i_r}$. They are dual basis to each other. 

We can show the following lemma \cite{HPH}, \cite{Ok}.

\begin{lem} For any $u \in S$, one has 
\begin{align}
\begin{cases}\label{eq:diffeq_Limap1}
      \dfrac{d}{dz}\Li(\xi_0u;z)=\dfrac{\Li(u;z)}{z},\\
      {}                                            \\
      \dfrac{d}{dz}\Li(\xi_1u;z)=\dfrac{\Li(u;z)}{1-z}.
\end{cases}
\end{align}
Hence $G(z)$ is a solution of \eqref{KZE1}.
\end{lem}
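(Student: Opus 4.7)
Both identities in \eqref{eq:diffeq_Limap1} are $\C$-linear in $u$, and by Proposition \ref{prop:reutanuer} every $u \in S$ has a unique expression $u = \sum_j w_j \sh \xi_0^j$ with $w_j \in S^0$, for which $\Li(u;z) = \sum_j \Li(w_j;z)\log^j z/j!$ by the definition of the MPL map. So it suffices to prove \eqref{eq:diffeq_Limap1} for $u = w \sh \xi_0^j$ with $w \in S^0$ and $j \geq 0$. Once \eqref{eq:diffeq_Limap1} is established, the assertion that $G(z)$ solves \eqref{KZE1} follows by comparing coefficients of words: every nonempty word $v$ factors uniquely as $\xi_0 v'$ or $\xi_1 v'$, so the coefficient of $V$ in $dG/dz$ equals $d\Li(v;z)/dz = \Li(v';z)/z$ or $\Li(v';z)/(1-z)$, which matches the coefficient of $V$ in $(X_0/z + X_1/(1-z))G(z)$.

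The base case $j = 0$ I would dispose of directly. If $w \in S^0 \setminus \{\bnull\}$, then $w$ ends in $\xi_1$, so both $\xi_0 w$ and $\xi_1 w$ again lie in $S^0$ and coincide with the iterated integrals $\int_0^z \xi_i(t)\Li(w;t)$; differentiating the outer integral yields the two identities. If $w = \bnull$, the MPL map gives $\Li(\xi_0;z) = \log z$ (via the representation $\xi_0 = \bnull \sh \xi_0$) and $\Li(\xi_1;z) = -\log(1-z)$, whose derivatives $1/z$ and $1/(1-z)$ match $\Li(\bnull;z)/z$ and $\Li(\bnull;z)/(1-z)$.

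The main obstacle is the case $j \geq 1$, in which $\xi_i(w \sh \xi_0^j)$ is no longer in the normal form $\sum v_k \sh \xi_0^k$ prescribed by Proposition \ref{prop:reutanuer}. I would iterate the recursive shuffle identity
\begin{align*}
(\xi_0 v) \sh \xi_0^j = \xi_0(v \sh \xi_0^j) + \xi_0\bigl((\xi_0 v) \sh \xi_0^{j-1}\bigr)
\end{align*}
to obtain the closed expansion
\begin{align*}
\xi_0(w \sh \xi_0^j) = \sum_{k=0}^{j}(-1)^k\, (\xi_0^{k+1} w) \sh \xi_0^{j-k},
\end{align*}
and then derive the companion formula
\begin{align*}
\xi_1(w \sh \xi_0^j) = \sum_{k=0}^{j}(-1)^k\, (\xi_0^{k} \xi_1 w) \sh \xi_0^{j-k}
\end{align*}
by combining $(\xi_1 w) \sh \xi_0^j = \xi_1(w \sh \xi_0^j) + \xi_0((\xi_1 w) \sh \xi_0^{j-1})$ with the $\xi_0$-formula just proved. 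Each summand in the two expansions is already in normal form; applying $\Li$, differentiating termwise via the base case, and using the product rule on the log factors produces two alternating sums whose consecutive pairs cancel exactly, leaving only the principal $k = 0$ contributions $\Li(w;z)\log^j z/(j!\,z) = \Li(u;z)/z$ and $\Li(w;z)\log^j z/(j!(1-z)) = \Li(u;z)/(1-z)$. This telescoping cancellation is routine but requires careful bookkeeping of indices, and constitutes the main technical work of the proof.
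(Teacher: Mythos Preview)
Your proof is correct and follows essentially the same route as the paper: reduce to $u = w \sh \xi_0^{j}$ with $w \in S^0$, handle $j=0$ via the iterated-integral definition, and for $j \ge 1$ use the shuffle recursion $\xi_i w \sh \xi_0^{j} = \xi_i(w \sh \xi_0^{j}) + \xi_0(\xi_i w \sh \xi_0^{j-1})$ to produce the needed cancellation. The only difference is packaging: the paper runs this recursion once and appeals to the induction hypothesis at level $j-1$, so a single pair of terms cancels at each inductive step, whereas you iterate the recursion to the closed alternating expansion and then telescope all at once. The paper's inductive version is slightly shorter and avoids the explicit bookkeeping you flag at the end, but the underlying computation is identical.
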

\begin{proof}
Putting $u=v \sh \xi_0^r \ (v \in S^0, \ r \geq 0)$, we prove \eqref{eq:diffeq_Limap1} by 
induction on $r$. If $r=0$, then $u=v \in S^0$. This case is already proved because we have 
\begin{align}\label{eq:diifeq_mpl1}
    \frac{d}{dz} \Li_{k_1,k_2,\ldots,k_r}(z) =
       \begin{cases}
           \dfrac{1}{z}\Li_{k_1-1,k_2,\ldots,k_r}    &   (k_1 \ge 2),  \\
                                                                     \\
           \dfrac{1}{1-z} \Li_{k_2,\ldots,k_r}(z)  &  ( k_1=1).
       \end{cases}
\end{align}
Now assume that, for $r-1 \ (r \ge 1)$, \eqref{eq:diffeq_Limap1} holds. Since 
\begin{align*}
          \xi_iv \sh \xi_0^r = \xi_i(v \sh \xi_0^r) + \xi_0(\xi_iv \sh \xi_0^{r-1}),
\end{align*}
applying $\Li(\bullet ; z)$ to this relation, we have 
\begin{align*}
      \Li(\xi_i(v \sh \xi_0^r);z)=\Li(\xi_iv;z)\frac{\log^rz}{r!} - \Li(\xi_0(\xi_iv \sh \xi_0^{r-1});z).
\end{align*}
By the assumption of induction, the right hand side above reads 
\begin{align*}
        & \frac{d}{dz}\{\mathrm{RHS}\}= \frac{d}{dz}\Li(\xi_iv;z)\cdot\frac{\log^rz}{r!}+
                   \frac{1}{z}\Li(\xi_iv;z)\frac{\log^{r-1}z}{(r-1)!}
        -\frac{1}{z}\Li(\xi_iv \sh \xi_0^{r-1};z) \\
        & = \frac{d}{dz}\Li(\xi_iv;z)\cdot\frac{\log^rz}{r!}.
\end{align*}
Since $\xi_iv \in S^0$, we have already proved \eqref{eq:diffeq_Limap1} for
$\ds \frac{d}{dz}\Li(\xi_iv;z)$. Hence we have 
\begin{align*}
        \frac{d}{dz}\Li(\xi_iv;z)\cdot\frac{\log^rz}{r!} 
              = \begin{cases}
             \dfrac{1}{z}\Li(v;z)\dfrac{\log^rz}{r!} = \dfrac{1}{z}\Li(v \sh \xi_0^r;z)
                                               & \quad (i=0), \\
             {} \\
             \dfrac{1}{1-z}\Li(v;z)\dfrac{\log^rz}{r!}=\dfrac{1}{1-z}\Li(v \sh \xi_0^r;z) 
                                               & \quad (i=1).
            \end{cases}
\end{align*}
This proves \eqref{eq:diffeq_Limap1} for $u=v \sh \xi_0^r$. 
\end{proof}

In \cite{IKZ}, the following proposition is implicitly written:
\begin{prop}[IKZ]
We put 
\begin{align}\label{eq:IKZ0}
         \varPhi=\sum_{w \in S} w \otimes W \in S \, \widehat{\otimes} \, \cU
\end{align}
where $\widehat{\otimes}$ denotes the topological tensor product. Then $\varPhi$ decomposes as follows:
\begin{align}\label{eq:IKZ1}
      \varPhi=\Big(\sum_{w\in S}\reg^0(w)\otimes W\Big)\exp_{\sh}(\xi_0\otimes X_0)
\end{align}
where $\exp_{\sh}(\xi_0\otimes X_0)=\sum_{k=0}^\infty \xi_0^k\otimes X_0^k$. Furthermore we have 
\begin{align}\label{eq:IKZ2}
   \varPhi=\exp_{\sh}(\xi_1\otimes X_1)\Big(\sum_{w\in S}\reg^{10}(w)\otimes W\Big)
      \exp_{\sh}(\xi_0\otimes X_0).
\end{align}
\end{prop}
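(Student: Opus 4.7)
The plan is to expand both sides of each identity in the algebra $S \widehat{\otimes} \cU$ (with shuffle on $S$ and concatenation on $\cU$) and compare the coefficient of each monomial $W \in \cU$. For \eqref{eq:IKZ1}, expanding the product gives
\begin{equation*}
\sum_{u, k} \reg^0(u) \sh \xi_0^k \otimes W X_0^k.
\end{equation*}
Each monomial $V \in \cU$ decomposes uniquely as $V = V' X_0^m$ with $V'$ not ending in $X_0$ (so $v' \in S^0$); the pairs $(u, k)$ contributing to $V$ are then $(v'\xi_0^j, m-j)$ for $0 \le j \le m$. Comparing with the LHS coefficient $v = v' \xi_0^m$ reduces \eqref{eq:IKZ1} to
\begin{equation*}
v' \xi_0^m = \sum_{j=0}^{m} \reg^0(v'\xi_0^j) \sh \xi_0^{m-j} \qquad (v' \in S^0,\ m \ge 0), \tag{$\star$}
\end{equation*}
where $\xi_0^k$ denotes the concatenation power.

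To prove $(\star)$, I introduce the ``trailing-$\xi_0$ stripping'' operator $R \colon S \to S$ defined on words by $R(u\xi_0) = u$, $R(u\xi_1) = 0 = R(\bnull)$. A direct interleaving count shows $R$ is a derivation of $(S, \sh)$ and that $R(w \sh \xi_0^j) = w \sh \xi_0^{j-1}$ for $w \in S^0$ and $j \ge 1$: interleavings of $w$ with $j$ copies of $\xi_0$ that end in $\xi_0$ correspond bijectively, via deletion of the final $\xi_0$, to interleavings of $w$ with $j-1$ copies. Writing $u = \sum_j w_j \sh \xi_0^j$ for the Reutenauer decomposition of Proposition \ref{prop:reutanuer}, this shows that $R$ shifts indices: $w_\ell(Ru) = w_{\ell+1}(u)$, and iterating gives $w_\ell(u) = \reg^0(R^\ell u)$. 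Specializing to $u = v'\xi_0^m$, where $R^\ell u = v'\xi_0^{m-\ell}$ (and vanishes for $\ell > m$), yields $w_\ell(v'\xi_0^m) = \reg^0(v'\xi_0^{m-\ell})$, which upon reindexing is exactly $(\star)$.

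For \eqref{eq:IKZ2}, given \eqref{eq:IKZ1} it suffices to prove
\begin{equation*}
\sum_w \reg^0(w) \otimes W = \exp_\sh(\xi_1 \otimes X_1) \cdot \sum_w \reg^{10}(w) \otimes W.
\end{equation*}
The strategy is symmetric: the ``leading-$\xi_1$ stripping'' operator $L$ (with $L(\xi_1 u) = u$, $L(\xi_0 u) = 0$) is a derivation of $(S, \sh)$, and together with the left regularization $\reg^1 \colon S \to S^1$ it yields the mirror identity $\xi_1^j v'' = \sum_{k=0}^j \xi_1^k \sh \reg^1(\xi_1^{j-k} v'')$ for $v''$ not starting with $\xi_1$. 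Applying the shuffle-algebra homomorphism $\reg^0$ and using the compositional identity $\reg^0 \circ \reg^1 = \reg^{10}$ -- which follows from the two-sided decomposition $S = S^{10}[\xi_1, \xi_0]$ by writing $u = \sum_{i,j} \xi_1^i \sh w_{ij} \sh \xi_0^j$ with $w_{ij} \in S^{10}$ and checking that successive application of $\reg^1$ and then $\reg^0$ picks out $w_{00}$ -- produces the coefficient identity $\reg^0(\xi_1^j v'') = \sum_k \xi_1^k \sh \reg^{10}(\xi_1^{j-k} v'')$ required for \eqref{eq:IKZ2}.

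The main obstacle is establishing the derivation property of $R$ (and $L$) together with the shift rule $R(w \sh \xi_0^j) = w \sh \xi_0^{j-1}$; both statements are elementary but require careful combinatorial bookkeeping of interleavings. Once these facts and the compositional identity $\reg^0 \circ \reg^1 = \reg^{10}$ are in hand, the remainder of the proof is just tensor-coefficient matching and invoking the homomorphism property of the regularization maps guaranteed by Proposition \ref{prop:reutanuer}.
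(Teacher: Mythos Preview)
The paper does not actually supply a proof of this proposition: it is stated with the attribution ``[IKZ]'' and used immediately afterward without argument. So there is no paper proof to compare against.

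Your argument is correct. The reduction of \eqref{eq:IKZ1} to the coefficient identity $(\star)$ is the right move, and your derivation-based computation of the Reutenauer coefficients $w_\ell(v'\xi_0^m)=\reg^0(v'\xi_0^{m-\ell})$ via the trailing-$\xi_0$ stripping operator $R$ is clean and valid; the key facts---that $R$ is a $\sh$-derivation and that $R$ annihilates $S^0$---follow exactly as you indicate from the recursive definition of $\sh$. The mirror argument for \eqref{eq:IKZ2} is likewise sound: the leading-$\xi_1$ stripping operator $L$ is a $\sh$-derivation for the same reason, the identity $\reg^0\circ\reg^1=\reg^{10}$ is immediate from the two-variable decomposition $S=S^{10}[\xi_1,\xi_0]$, and applying the algebra homomorphism $\reg^0$ to the mirror of $(\star)$ (using $\reg^0(\xi_1^k)=\xi_1^k$) gives exactly the coefficient identity you need. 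The only cosmetic point is that your ``main obstacle'' is not really an obstacle: both the derivation property and the shift rule are one-line consequences of the shuffle recursion, so you might present them as lemmas rather than flag them as difficulties.
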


Since, by using \eqref{eq:IKZ0} and \eqref{eq:IKZ1}, we have 
\begin{align*}
   G(z) = \big(\Li \, \otimes \, \mathrm{id}_{\cU} \big)(\varPhi)
        = \Big(\sum_{w \in S}\Li(\reg^0(w);z)W \Big) z^{X_0},
\end{align*}
$G(z)$ satisfies the same asymptotic condition as $\cL(z)$. Hence
\begin{align*}
     G(z)=\cL(z).
\end{align*}

Moreover we obtain the following proposition.

\begin{prop}[\cite{OkU}]\label{prop:fundamental0}
The fundamental solution $\cL(z)$ normalized at $z=0$ can be written as follows:
\begin{align}
    \cL(z)&=\sum_{w \in S}\Li(w;z)W \label{eq:fundamental01}\\
          &=\Big(\sum_{w \in S}\Li(\reg^0(w);z)W \Big)z^{X_0} \label{eq:fundamental02}\\
          &=(1-z)^{-X_1}\Big(\sum_{w \in S}\Li(\reg^{10}(w);z)W \Big)z^{X_0}. \label{eq:fundamental03}
\end{align}
\end{prop}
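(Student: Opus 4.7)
The plan is to combine the preceding lemma (which shows that $G(z):=\sum_{w\in S}\Li(w;z)W$ satisfies \eqref{KZE1}) with the IKZ decomposition of $\varPhi$, and then invoke the uniqueness part of Proposition \ref{prop:KZE1} to identify $G(z)$ with $\cL(z)$.

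First I would consider $G(z)=\sum_{w\in S}\Li(w;z)W = (\Li(\bullet;z)\otimes\id_\cU)(\varPhi)$, which is already known to solve \eqref{KZE1} by the preceding lemma. Applying $\Li(\bullet;z)\otimes\id_\cU$ to the IKZ decomposition \eqref{eq:IKZ1} gives
\begin{align*}
G(z)=\Big(\sum_{w\in S}\Li(\reg^0(w);z)\,W\Big)\,z^{X_0},
\end{align*}
since $\Li(\exp_\sh(\xi_0\otimes X_0);z)=\sum_{k\ge 0}\frac{\log^k z}{k!}X_0^k=z^{X_0}$ by the definition of the MPL map on $\xi_0^k\in S$. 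To match this against Proposition \ref{prop:KZE1}, I must check that the prefactor $\hcL(z):=\sum_{w\in S}\Li(\reg^0(w);z)W$ is holomorphic at $z=0$ with $\hcL(0)=\bunit$. This follows because $\reg^0(w)\in S^0$, and for a non-empty word in $S^0$ the associated $\Li_{k_1,\ldots,k_r}(z)$ vanishes at $z=0$ (since such a word ends with $\xi_1$, the corresponding MPL is convergent at $0$ by the Taylor series representation), whereas only the empty word contributes $1$ at $z=0$. Consequently $G(z)$ satisfies exactly the asymptotic conditions characterizing $\cL(z)$, and the uniqueness in Proposition \ref{prop:KZE1} yields \eqref{eq:fundamental01} and \eqref{eq:fundamental02} simultaneously.

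For the third representation \eqref{eq:fundamental03}, I would apply $\Li(\bullet;z)\otimes\id_\cU$ to the finer decomposition \eqref{eq:IKZ2} instead. The only new ingredient is to compute the image of $\exp_\sh(\xi_1\otimes X_1)=\sum_{k\ge 0}\xi_1^k\otimes X_1^k$, where $\xi_1^k$ denotes the shuffle power. Using the standard identity $\xi_1^{\sh k}=k!\,\xi_1\cdots\xi_1$ (shuffle powers of a single letter) and $\Li(\xi_1\cdots\xi_1;z)=\Li_{1,\ldots,1}(z)=\frac{(-\log(1-z))^k}{k!}$, one obtains
\begin{align*}
\Li\!\left(\exp_\sh(\xi_1\otimes X_1);z\right)=\sum_{k\ge 0}\frac{(-\log(1-z))^k}{k!}X_1^k=(1-z)^{-X_1}.
\end{align*}
Combining this with the evaluation of $\exp_\sh(\xi_0\otimes X_0)$ above immediately gives \eqref{eq:fundamental03}.

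The main subtlety I expect is the bookkeeping around the regularization maps $\reg^0$ and $\reg^{10}$ together with the evaluation of $\Li$ on shuffle exponentials of $\xi_0$ and $\xi_1$: one needs the fact that $\Li(\bullet;z)$ is an algebra homomorphism with respect to the shuffle product (following from the iterated integral identity $\int_0^z u\cdot\int_0^z v=\int_0^z u\sh v$) in order to rewrite the shuffle exponentials as ordinary matrix exponentials $z^{X_0}$ and $(1-z)^{-X_1}$. Once this homomorphism property is in hand, the rest is a direct transport of the IKZ identities \eqref{eq:IKZ1} and \eqref{eq:IKZ2} through $\Li(\bullet;z)\otimes\id_\cU$, followed by a single appeal to the uniqueness statement for $\cL(z)$.
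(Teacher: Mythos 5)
Your proposal is correct and follows essentially the same route as the paper: the paper also identifies $G(z)=(\Li\otimes\id_{\cU})(\varPhi)$ as a solution of \eqref{KZE1} via the preceding lemma, transports the IKZ decompositions \eqref{eq:IKZ1} and \eqref{eq:IKZ2} through $\Li(\bullet;z)\otimes\id_{\cU}$ to produce the $z^{X_0}$ and $(1-z)^{-X_1}$ factors, and concludes by the uniqueness of the normalized solution from Proposition \ref{prop:KZE1}. Your explicit verifications of the asymptotic condition and of the evaluation of the shuffle exponentials are details the paper leaves implicit, but they are accurate and do not change the argument.
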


We consider $\cL^{(1)}(z)$, the fundamental solution normalized at $z=1$ of \eqref{KZE1}. 
By the coordinate transformation $z \mapsto t=f(z)=1-z$, \eqref{KZE1} is transformed to 
\begin{align*}
         \frac{dG}{dt}=\Big(\frac{-X_1}{t}+\frac{-X_0}{1-t}\Big)G.
\end{align*}
Hence we can show that $\cL^{(1)}(z)$ which satisfies 
\begin{align*}
             \cL^{(1)}(z)=\hcL^{(1)}(z)(1-z)^{-X_1}
\end{align*}
where $\hcL^{(1)}(z)$ is holomorphic at $z=1$ and $\hcL^{(1)}(1)=\bunit$, uniquely exists. 
$\hcL^{(1)}(z)$ is expressed as follows:
\begin{align}\label{eq:MPLs1}
       & \hcL^{(1)}(z)=\sum_{s=0}^{\infty} \hcL^{(1)}_s(z), \nonumber \\
       & \hcL^{(1)}_s(z) = \!\!\!\sum_{k_1+\cdots+k_r=s}\!\!\!\Li_{k_1,\ldots,k_r}(1-z) \times \nonumber \\
       & \hspace{20mm} \times \ad(-X_1)^{k_1-1}\mu(-X_0)\cdots\ad(-X_1)^{k_r-1}\mu(-X_0)(\bunit).
\end{align}
From Proposition \ref{prop:fundamental0}, we obtain 
\begin{align}
         \cL^{(1)}(z)&=\sum_{w \in S}\Li(w;1-z)f_*(W) \label{eq:fundamental11}\\
         &=\Big(\sum_{w \in S}\Li(\reg^0(w);1-z)f_*(W)\Big)(1-z)^{-X_1} \label{eq:fundamental12}\\
         &=z^{X_0}\Big(\sum_{w \in S}\Li(\reg^{10}(w);1-z)f_*(W)\Big)(1-z)^{-X_1}, \label{eq:fundamental13}
\end{align}
where $f_*: \cU \to \cU$ is an automorphism defined by $f_*(X_0)=\!-X_1$, $f_*(X_1)=\!-X_0$.

\subsection{Drinfel'd associator and the connection problem of KZE1}

We set
\begin{align}
        \varPhi_{\rm KZ}=\varPhi_{\rm KZ}(X_0,X_1)=\sum_{w\in S}\zeta(\reg^{10}(w))W
\end{align}
which is referred to as the Drinfel'd associator. This is also a grouplike element of 
$\tcU$.

\begin{prop}[\cite{OkU}]\label{prop:connection1}
We have
\begin{enumerate}
\item $\cL(z)=\cL^{(1)}(z)\varPhi_{\rm KZ}$.
\item The relation \ $(\cL^{(1)}(z))^{-1}\cL(z)=\varPhi_{\rm KZ}$ \ is equivalent to 
\begin{align}\label{eq:gif}
    \sum_{uv=w}\Li(\tau(u);1-z)\Li(v;z)=\zeta(\reg^{10}(w)) \quad \text{for $\forall w$ word in $S$}.
\end{align}
Here $\tau: S \to S$ is an anti-automorphism defined by $\tau(\xi_0)=\xi_1,\; \tau(\xi_1)=\xi_0$. 
\end{enumerate}
\end{prop}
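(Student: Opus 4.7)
Since $\cL(z)$ and $\cL^{(1)}(z)$ are both fundamental solutions of \eqref{KZE1}, the product $\cL^{(1)}(z)^{-1}\cL(z)$ is a constant element of $\tcU$. My plan for (1) is to identify this constant with $\varPhi_{\rm KZ}$ by an asymptotic analysis at $z=1$, and for (2) to expand the resulting identity in the monomial basis $\{W\}$ of $\tcU$ and equate coefficients.

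For (1), combining the representation \eqref{eq:fundamental03}, $\cL(z) = (1-z)^{-X_1}B(z)z^{X_0}$ with $B(z):=\sum_w \Li(\reg^{10}(w);z)W$, together with the normalization $\cL^{(1)}(z)=\hcL^{(1)}(z)(1-z)^{-X_1}$, $\hcL^{(1)}(1)=\bunit$, I would write
\begin{equation*}
\cL^{(1)}(z)^{-1}\cL(z) = \bigl[(1-z)^{X_1}\hcL^{(1)}(z)^{-1}(1-z)^{-X_1}\bigr]\,B(z)\,z^{X_0}.
\end{equation*}
As $z\to 1$, each $\reg^{10}(w)\in S^{10}$ gives a convergent iterated integral to $1$, so $B(z)\to \varPhi_{\rm KZ}$ grade by grade, and $z^{X_0}\to\bunit$. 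The bracketed conjugation tends to $\bunit$: the $\bunit$-part of $\hcL^{(1)}(z)^{-1}$ produces exactly $\bunit$, while the remainder $\hcL^{(1)}(z)^{-1}-\bunit$ vanishes at $z=1$ and conjugation by $(1-z)^{\pm X_1}$ introduces only bounded polynomials in $\log(1-z)$ on each graded piece. Hence the constant equals $\varPhi_{\rm KZ}$.

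For (2), I would exploit that $\cL^{(1)}(z)$ is a grouplike element of $\tcU$ (by the same argument as for $\cL(z)$, applied after the substitution $z\mapsto 1-z$), so its inverse equals $S(\cL^{(1)}(z))$, where $S$ denotes the antipode, the anti-algebra involution with $S(X_i)=-X_i$. Starting from \eqref{eq:fundamental11}, $\cL^{(1)}(z)=\sum_w \Li(w;1-z)f_*(W)$, a direct computation shows that the coefficient of a monomial $U$ in $S(\cL^{(1)}(z))$ is $\Li(\tau(u);1-z)$, where $u$ is the word corresponding to $U$. Thus
\begin{equation*}
\cL^{(1)}(z)^{-1} = \sum_u \Li(\tau(u);1-z)U.
\end{equation*}
Multiplying by $\cL(z)=\sum_v \Li(v;z)V$ and reading off the coefficient of each $W$ yields $\sum_{uv=w}\Li(\tau(u);1-z)\Li(v;z)$; equating with the coefficient $\zeta(\reg^{10}(w))$ of $W$ in $\varPhi_{\rm KZ}$ and using linear independence of the monomials gives the claimed equivalence.

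The step requiring the most care is the asymptotic claim in (1), namely that $(1-z)^{X_1}\hcL^{(1)}(z)^{-1}(1-z)^{-X_1}\to\bunit$ in $\tcU$ as $z\to 1$. The potential pitfall is the logarithmic singularity in the conjugating factors, but it is tamed grade-by-grade: since $\hcL^{(1)}(z)^{-1}-\bunit$ vanishes to first order at $z=1$ while the conjugation contributes only polynomially bounded powers of $\log(1-z)$ per grade, the product vanishes as required. Once this is in hand, part (2) reduces to formal bookkeeping using the antipode expression for $\cL^{(1)}(z)^{-1}$.
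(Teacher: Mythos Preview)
Your proposal is correct and follows essentially the same route as the paper: for (1), both argue that $(\cL^{(1)}(z))^{-1}\cL(z)$ is constant and evaluate it as $z\to 1$ using the $\reg^{10}$ factorization \eqref{eq:fundamental03} (the paper also invokes \eqref{eq:fundamental13} for $\cL^{(1)}$, which packages your conjugation estimate slightly more cleanly but is not essentially different); for (2), both invert $\cL^{(1)}(z)$ via the antipode using grouplikeness, rewrite the result as $\sum_u \Li(\tau(u);1-z)\,U$, and compare coefficients of $W$.
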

\begin{proof}
\begin{enumerate}
\item Note that $\Li_{k_1,\dots,k_r}(z)$ is holomorphic in $D \!=\! \C \setminus \{z=x \,|\, 1 \!\le\! x\}$, 
and $\Li_{k_1,\dots,k_r}(1-z)$ is holomorphic in $D'=\C \setminus \{z=x \,|\, x \le 0\}$. 
From $\eqref{eq:MPLs0}$ and $\eqref{eq:MPLs1}$, $\big(\cL^{(1)}(z)\big)^{-1}\cL(z)$ is holomorphic in a 
simply connected domain $D \cap D'$. Since $\ds \frac{d}{dz}\left\{\big(\cL^{(1)}(z)\big)^{-1}\cL(z)\right\}=0$, 
$\big(\cL^{(1)}(z)\big)^{-1}\cL(z)=C$ is constant. 
From $\eqref{eq:fundamental03}, \eqref{eq:fundamental13}$, taking the limit $z \to 1, z \in D \cap D'$,
we have 
\begin{align*}
        C &= \lim_{z \to 1} \ 
           \Big\{ (1-z)^{X_1}\Big( \sum_{w \in S}\Li(\reg^{10}(w);1-z)f_*(W) \Big)^{-1}z^{-X_0} \\
        & \hspace{20mm} \cdot (1-z)^{-X_1}\Big( \sum_{w \in S}\Li(\reg^{10}(w);z)W \Big)z^{X_0} \Big\} \\
          &= \sum_{w \in S} \zeta(\reg^{10}(w))W =\varPhi_{\rm KZ}
\end{align*}
\item \ Since $\cL^{(1)}(z)$ is a grouplike element of $\tcU$, from \eqref{eq:fundamental11}, 
we have 
\begin{align*}
      \big(\cL^{(1)}(z)\big)^{-1}=\sum_{w \in S}\Li(w;1-z)(\rho \circ f_*)(W)
\end{align*}
where $\rho:\cU \to \cU$ is an anti-homomorphism defined by $\rho(X_0)=-X_0$, $\rho(X_1)=-X_1$,
which is the antipode of $\cU$ as a Hopf algebra. Hence, introducing an anti-homomorphism 
$\tau: S \to S$ defined by $\tau(\xi_0)=\xi_1, \ \tau(\xi_1)=\xi_0$, we have 
\begin{align*}
        \big(\cL^{(1)}(z)\big)^{-1}=\sum_{w \in S}\Li(\tau(w);1-z) W.
\end{align*}
Therefore we have
\begin{align*}
        \big(\cL^{(1)}(z)\big)^{-1}\cL(z) & =\Big( \sum_{u \in S}\Li(\tau(u);1-z)U \Big) \cdot
                                  \Big(\sum_{v \in S}\Li(v;1-z)V \Big) \\
                                  &= \sum_{w}\Big(\sum_{uv=w}\Li(\tau(u);1-z)\Li(v;z)\Big)W.
\end{align*}
This proves \eqref{eq:gif}.
\end{enumerate}
\end{proof}

The relation \eqref{eq:gif} is called the generalized inversion formula.

\begin{prop}[\cite{OkU}]\label{prop:duality}
We have
\begin{enumerate}
\item $\varPhi_{\rm KZ}(X_0,X_1)\varPhi_{\rm KZ}(-X_1,-X_0)=\bunit$ \quad (the duality relation of 
      $\varPhi_{\rm KZ}$). \label{prop:duality:1}
\item From \eqref{prop:duality:1}, we obtain the duality relation of MZV:
\begin{align}
       \zeta(\reg^{10}(w))=\zeta(\reg^{10}(\tau(w))) \quad \text{for} \ \forall w \in S.
\end{align}
\end{enumerate}
\end{prop}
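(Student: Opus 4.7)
The plan is to exploit the involution $z \mapsto 1-z$ of \eqref{KZE1}, which swaps the singular points $0$ and $1$ and transforms the equation with generators $(X_0,X_1)$ into \eqref{KZE1} with $(-X_1,-X_0)$. Applying Proposition \ref{prop:connection1}~(1) both to the original equation and to the transformed one will yield part (1); part (2) will then follow by combining (1) with the fact that $\varPhi_{\rm KZ}$ is a grouplike element of $\tcU$ and comparing coefficients on the dual basis of words in $\cU$.

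For (1), I first observe that $\cL(z;X_0,X_1)$, regarded as a function of $t=1-z$, solves \eqref{KZE1} with $(X_0,X_1)$ replaced by $(-X_1,-X_0)$ and is now normalized at $t=1$; reading off the normalization from $\cL^{(1)}(z)=\hcL^{(1)}(z)(1-z)^{-X_1}$ gives
\begin{align*}
\cL(z;X_0,X_1) &= \cL^{(1)}(1-z;-X_1,-X_0), \\
\cL^{(1)}(z;X_0,X_1) &= \cL(1-z;-X_1,-X_0).
\end{align*}
Applying Proposition \ref{prop:connection1}~(1) at the point $t=1-z$ to the equation with generators $(-X_1,-X_0)$ gives
\begin{align*}
\cL(1-z;-X_1,-X_0) = \cL^{(1)}(1-z;-X_1,-X_0)\,\varPhi_{\rm KZ}(-X_1,-X_0),
\end{align*}
which via the dictionary above rewrites as $\cL^{(1)}(z;X_0,X_1) = \cL(z;X_0,X_1)\,\varPhi_{\rm KZ}(-X_1,-X_0)$. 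Substituting this into $\cL(z)=\cL^{(1)}(z)\varPhi_{\rm KZ}(X_0,X_1)$ and cancelling $\cL(z)$ on the left yields the duality (1).

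For (2), I use that $\varPhi_{\rm KZ}$ is grouplike in the Hopf algebra $\tcU$, so its inverse equals its image under the antipode $\rho$ of $\cU$ (the anti-automorphism defined in the proof of Proposition \ref{prop:connection1}). Combined with (1) this reads $f_*(\varPhi_{\rm KZ})=\rho(\varPhi_{\rm KZ})$; since $f_*$ is an involution, this gives $\varPhi_{\rm KZ}=(f_*\circ\rho)(\varPhi_{\rm KZ})$. A direct check shows that $f_*\circ\rho$ is the anti-automorphism of $\cU$ swapping $X_0\leftrightarrow X_1$, so on a basis word $W=X_{i_1}\cdots X_{i_r}$ it produces $X_{1-i_r}\cdots X_{1-i_1}$, i.e.\ the $\cU$-image of $\tau(w)$. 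Expanding $\varPhi_{\rm KZ}=(f_*\circ\rho)(\varPhi_{\rm KZ})$ in the $W$-basis and comparing the coefficient of a fixed $W$ then yields $\zeta(\reg^{10}(w))=\zeta(\reg^{10}(\tau(w)))$.

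The only genuinely non-formal step is the transformation dictionary for $\cL$ and $\cL^{(1)}$ under $z\mapsto 1-z$: one must reconcile the two asymptotic normalizations $\cL(z)=\hcL(z)z^{X_0}$ and $\cL^{(1)}(z)=\hcL^{(1)}(z)(1-z)^{-X_1}$, keeping track of the sign changes on the generators so that the fundamental solution normalized at $z=0$ really matches, after the change of variable, the fundamental solution normalized at $t=1$ of the transformed equation. Once this identification is in place, everything else is routine Hopf-algebra bookkeeping with the two involutions $f_*$ and $f_*\circ\rho$.
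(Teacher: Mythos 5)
Your proof is correct. For part (1) you take a mildly different route from the paper: the paper evaluates the connection constant $(\cL^{(1)}(z))^{-1}\cL(z)$ (already identified with $\varPhi_{\rm KZ}(X_0,X_1)$ via the $z\to 1$ limit) a second time in the limit $z\to 0$, reading off $\varPhi_{\rm KZ}(-X_1,-X_0)^{-1}$ directly from the explicit expressions \eqref{eq:fundamental03} and \eqref{eq:fundamental13}; you instead package the same information as the dictionary $\cL(z;X_0,X_1)=\cL^{(1)}(1-z;-X_1,-X_0)$, $\cL^{(1)}(z;X_0,X_1)=\cL(1-z;-X_1,-X_0)$ and apply Proposition \ref{prop:connection1}(1) a second time to the transformed equation. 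The two arguments are essentially equivalent --- the paper's formula \eqref{eq:fundamental13} for $\cL^{(1)}$ is itself obtained from the substitution $t=1-z$ --- but yours avoids the explicit series and rests only on the uniqueness of the normalized solutions together with the fact that $z\mapsto 1-z$ preserves the cut domain $D\cap D'$, so that the branch normalizations really do match (the point you correctly flag as the only non-formal step). Your part (2) is the paper's argument: grouplike implies that the inverse is the image under the antipode $\rho$, and $f_*\circ\rho$ is the anti-automorphism of $\cU$ dual to $\tau$. One cosmetic remark: your cancellation yields $\varPhi_{\rm KZ}(-X_1,-X_0)\,\varPhi_{\rm KZ}(X_0,X_1)=\bunit$, i.e.\ the product in the opposite order to the one stated; this is harmless, since an element of $\tcU$ with constant term $\bunit$ admitting a one-sided inverse has a two-sided inverse, but that sentence should be said explicitly.
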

\begin{proof}
Taking the limit $z \to 0, z \in D \cap D'$ of $(\cL^{(1)}(z))^{-1}\cL(z)$, we have
\begin{align*}
       \varPhi_{\rm KZ}(X_0,X_1)= \Big( \sum_{w} \Li(\reg^{10}(w);1)f_*(W)\Big)^{-1}
       =\varPhi_{\rm KZ}(-X_1,-X_0)^{-1}.
\end{align*}
Since the Drinfel'd associator is grouplike, we have 
\begin{align*}
      \varPhi_{\rm KZ}(-X_1,-X_0)^{-1} &= \sum_{w}\zeta(\reg^{10}(w))(\iota \circ f_*)(W) \\
                                    &= \sum_{w}\zeta(\reg^{10}(\tau(w))W.
\end{align*}
This completes the proof.
\end{proof}

For examples of Proposition \ref{prop:connection1} and Proposition \ref{prop:duality}, we present the 
following:
\begin{enumerate}
\item $\Li_2(z)-\log z \Li_1(z)+\Li_2(1-z)=\zeta(2)$. \ (Euler's inversion formula \cite{Le})
\item $\zeta(3)=\zeta(2,1)$. \ 
      (Indeed, $\zeta(3)=\zeta(\xi^2_0\xi_1)=\zeta(\tau(\xi^2_0\xi_1))=\zeta(\xi_0\xi^2_1)=\zeta(2,1)$.)
\end{enumerate}

\subsection{Schlesinger type equation of one variable}

Consideration on \eqref{KZE1} generalizes to the case of the Schlesinger type equation.
\begin{align}\tag{SE1}\label{SE1}
        \frac{dG}{dz}=\Big(\frac{X_0}{z}+\sum_{j=1}^m\frac{a_jX_j}{1-a_jz}\Big)G
\end{align}
where $a_1,\ldots,a_m \in \C\setminus\{0\}$ are distinct points. 
The coefficients $X_0,X_1,\ldots,X_m$ are generators of a free Lie algebra
\begin{align*}
                  \fX=\C\{X_0,X_1,\ldots,X_m\}.
\end{align*}
The universal enveloping algebra $\cU=\cU(\fX)=\C\langle X_0,X_1,\ldots,X_m\rangle$ is 
a non-commutative polynomial ring and 
a completion $\widetilde{\cU}=\widetilde{\cU}(\fX)=\C\langle\langle X_0,X_1,\ldots,X_m\rangle\rangle$ 
is a non-commutative formal power series ring.

The one forms $\xi_0,\xi_1,\ldots,\xi_m$ are defined by
\begin{align*}
       \xi_0=\frac{dz}{z},\qquad \xi_j=\frac{a_jdz}{1-a_jz}\quad (1\le j \le m)
\end{align*}
and let $S=S(\xi_0,\xi_1,\ldots,\xi_m)=(\C\langle \xi_0,\xi_1,\ldots,\xi_m\rangle,\sh,\bnull)$ be 
a shuffle algebra generated by $\xi_0,\xi_1,\ldots,\xi_m$.

\begin{prop}\label{prop:SE1}
The Schlesinger type equation \eqref{SE1} has a unique solution $\cL(z)$ satisfying the asymptotic condition 
\begin{align*}
        & \cL(z)=\hcL(z)z^{X_0}, \\
        & \hcL(z) \ \text{is holomorphic at $z=0$ and $\hcL(0)=\bunit$}.
\end{align*}
\end{prop}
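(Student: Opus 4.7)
The plan is to mimic the proof of Proposition \ref{prop:KZE1} essentially verbatim, the only change being that the single coefficient $X_1/(1-z)$ is replaced by the sum $\sum_{j=1}^m a_jX_j/(1-a_jz)$. First I would substitute the ansatz $\cL(z)=\hcL(z)z^{X_0}$ into \eqref{SE1} and use $\frac{d}{dz}z^{X_0}=\frac{X_0}{z}z^{X_0}$ together with the fact that $z^{X_0}$ commutes with $X_0$, to derive the equivalent equation
\begin{align*}
\frac{d\hcL(z)}{dz}=\frac{1}{z}[X_0,\hcL(z)]+\sum_{j=1}^m\frac{a_j}{1-a_jz}X_j\hcL(z).
\end{align*}
Since the right hand side raises the total degree in the generators $X_0,X_1,\dots,X_m$ by one, decomposing $\hcL(z)=\sum_{s\ge 0}\hcL_s(z)$ into its homogeneous components in $\widetilde{\cU}$ and matching degrees yields the triangular recursion
\begin{align*}
\frac{d\hcL_{s+1}(z)}{dz}=\frac{1}{z}[X_0,\hcL_s(z)]+\sum_{j=1}^m\frac{a_j}{1-a_jz}X_j\hcL_s(z),
\end{align*}
with the initial condition $\hcL_0(z)=\bunit$ enforced by $\hcL(0)=\bunit$.

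Next I would integrate this recursion from $0$ to $z$ along a path in a neighborhood of the origin. The holomorphy of $\hcL$ at $z=0$ forces the constants of integration to be chosen so that $\hcL_s(0)=0$ for $s\ge 1$. By induction on $s$ this pins down $\hcL_s(z)$ uniquely as the iterated integral
\begin{align*}
\hcL_s(z)=\int_0^z\Bigl(\xi_0\otimes\ad(X_0)+\sum_{j=1}^m\xi_j\otimes\mu(X_j)\Bigr)^{s}(\bnull\otimes\bunit),
\end{align*}
the natural analogue of \eqref{eq:MPLs0}. This both establishes existence and, since at each step the solution of the ODE with the prescribed value at $0$ is unique, establishes uniqueness of $\hcL_s$, hence of $\hcL$ and therefore of $\cL$.

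The only technical point not already present in the proof of Proposition \ref{prop:KZE1} is checking that the one forms $\xi_0,\xi_1,\dots,\xi_m$ are all holomorphic in a common neighborhood of $0$ (except for the logarithmic singularity of $\xi_0$, which is precisely what the factor $z^{X_0}$ is designed to absorb), so that the iterated integrals defining $\hcL_s(z)$ are holomorphic on the simply connected domain $\C\setminus\bigcup_{j=1}^m\{z\mid a_jz\in[1,\infty)\}$. This is immediate from the assumption $a_j\ne 0$ for all $j$. Convergence of the full series $\sum_s\hcL_s(z)$ in $\widetilde{\cU}$ is automatic because the sum is taken in the completion with respect to the degree filtration; thus the main obstacle — ensuring that the formal construction actually produces a genuine analytic solution — is handled by the same standard estimate on iterated integrals used implicitly in the KZE1 case.
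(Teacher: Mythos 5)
Your proposal is correct and follows essentially the same route as the paper: decompose $\hcL(z)$ into homogeneous components, derive the triangular recursion, and solve it uniquely by iterated integrals of $\xi_0,\xi_1,\dots,\xi_m$, exactly as in Proposition \ref{prop:KZE1}. The paper's own proof is just a terser version of this, additionally writing out the resulting coefficients as hyperlogarithms $L({}^{k_1}a_{i_1}\cdots{}^{k_r}a_{i_r};z)$ with their convergent Taylor expansions for $|z|<\min_k|a_{i_k}|^{-1}$, which plays the role of your holomorphy remark.
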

\begin{proof}
Similarly as in the case of \eqref{KZE1}, 
$\ds \hcL(z)=\sum_{s=0}^\infty \hcL_s(z) \ (\hcL_s(z) \in \cU_s)$ are expressed as 
\begin{align*}
  \hcL_s(z) &=\int_0^z \Big(\xi_0\otimes \ad(X_0)+\sum_{j=1}^m\xi_j\otimes\mu(X_j)\Big)^s(\bnull\otimes\bunit)\\
            &=\!\!\!\!\!\!\sum_{\substack{k_1+\cdots+k_r=s\\i_1,\ldots,i_r\in\{1,\ldots,m\}}}
            \!\!\!\!\!\!\!\!\!L({}^{k_1}a_{i_1}\cdots{}^{k_r}a_{i_r};z)
            \ad(X_0)^{k_1-1}\mu(X_{i_1})\cdots\ad(X_0)^{k_r-1}\mu(X_{i_r})(\bunit).
\end{align*}
Here $L({}^{k_1}a_{i_1}\cdots{}^{k_r}a_{i_r};z)$ stands for a hyperlogarithm which is by definition 
\begin{align*}
&L({}^{k_1}a_{i_1}\cdots{}^{k_r}a_{i_r};z):=\int_0^z\xi_0^{k_1-1}\xi_{i_1}\cdots\xi_0^{k_r-1}\xi_{i_r}\\
&\hspace{3cm}= \sum_{n_1>n_2\cdots>n_r>0}\frac{a_{i_1}^{n_1-n_2}a_{i_2}^{n_2-n_3}
\cdots a_{i_r}^{n_r}}{n_1^{k_1}\cdots n_r^{k_r}}z^{n_1}.
\end{align*}
This Taylor expansion is absolutely convergent for $\ds |z|<\min_{1\le k \le r}|a_{i_k}|^{-1}$.
\end{proof}

The solution $\cL(z)$ in Proposition \ref{prop:SE1} is referred to as the fundamental solution 
normalized at $z=0$ of \eqref{SE1}. $\cL(z)$, $\hcL(z)$ are grouplike elements of $\tcU$.

\section{KZE of two variables}
\subsection{Definition of KZE of two variables}

By the cubic coordinate on $\cM_{0,5}: z_1=r(1,4;3,5), z_2=r(2,1;3,5)$, 
KZE on $\cM_{0,5}$ is represented as the KZ equation of two variables (KZE2):
\begin{gather}\tag{KZE2}\label{KZE2}
dG = \varOmega G, \quad \varOmega=\xi_1 X_1 + \xi_{11} X_{11} + \xi_2 X_2 + \xi_{22} X_{22} + \xi_{12} X_{12}, \\[1ex]
\xi_1=\frac{dz_1}{z_1},\;\; \xi_{11}=\frac{dz_1}{1-z_1},\;\; \xi_2=\frac{dz_2}{z_2},\;\; \xi_{22}=\frac{dz_2}{1-z_2},\;\; \xi_{12}=\frac{d(z_1z_2)}{1-z_1z_2}. \notag \\[-6ex] \notag
\end{gather}

\vspace{3mm}

\noindent
The singular divisors $D$ of \eqref{KZE2} are 
\begin{align*}
             D=\{z_1=0,\, 1, \, \infty\} \cup \{z_2=0,\, 1, \, \infty\} \cup \{z_1z_2=1\}. 
\end{align*}
The coefficients $X_i,X_{ii},X_{12}$ are given by
\begin{align*}
X_1&=\varOmega_{12}+\varOmega_{13}+\varOmega_{14},\quad X_2=\varOmega_{23},\\
X_{11}&=-\varOmega_{14},\quad X_{22}=-\varOmega_{12},\quad X_{12}=-\varOmega_{24}, 
\end{align*}
which satisfy
\begin{align}\label{IR2}
\begin{cases}
[X_1,X_2]=[X_{11},X_2]=[X_1,X_{22}]=0, \\
[X_{11},X_{22}]=[-X_{11},X_{12}]=[X_{22},X_{12}]=[-X_1+X_2,X_{12}].
\end{cases}
\end{align}
$\fX$ is the Lie algebra generated by $X_1,\ldots,X_{12}$ with \eqref{IR2} as the fundamental relations.

Note that \eqref{AR} for this case is
\begin{align}
\begin{cases}
\xi_1\wedge\xi_{11}=0,\quad \xi_2\wedge\xi_{22}=0,\\
(\xi_1+\xi_2)\wedge\xi_{12}=0,\\
\xi_{11}\wedge\xi_{12}+\xi_{22}\wedge(\xi_{11}-\xi_{12})-\xi_2\wedge\xi_{12}=0.
\end{cases}
\end{align}

\vspace{5mm}

\hspace{-25mm}
\begin{picture}(0,0)(0,4)
\put(3,-0.3){\scalebox{0.5}{\includegraphics{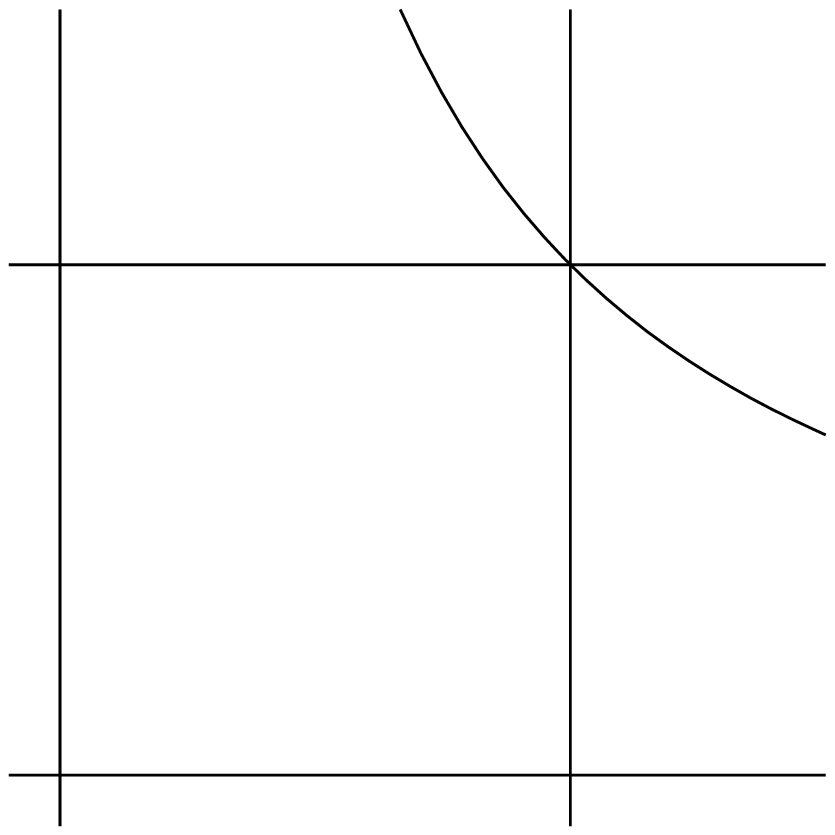}}}
\put(2.4,-0.3){\small $(0,0)$}
\put(6.1,-0.3){\small $(1,0)$}
\put(2.4,2.9){\small $(0,1)$}
\put(6.1,2.9){\small $(1,1)$}
\put(7.5,0.0){\small $z_1$}
\put(3.2,4.2){\small $z_2$}
\put(9.0,2){\shortstack[l]{singular divisors\\$D=\{z_1=0,1,\infty\}$\\\hspace{1.5cm}$\cup\{z_2=0,1,\infty\}$\\\hspace{2.5cm}$\cup\{z_1z_2=1\}$}}
\end{picture}

\vspace{45mm}

\subsection{Fiber space structure of $\cM_{0,5}$ and decomposition of $\fX$}

Consider the projection 
\begin{align}
           p_2: \cM_{0,5} \to \cM_{0,4}: \ 
            [x_1,x_2,x_3,x_4,x_5] \mapsto [x_1,x_3,x_4,x_5].
\end{align}

\vspace{10mm}

\hspace{-65mm}
\begin{picture}(0,4)(-8.5,-1.5)
\put(1,-1.2){$\cM_{0,4}\cong \bP^1\setminus\{0,1,\infty\}$}
\put(0,0){\line(1,0){5}}
\put(0.5,0){\circle{0.1}}
\put(0.4,-0.5){0}
\put(2.0,0){\circle{0.1}}
\put(1.9,-0.5){1}
\put(3.5,0){\circle{0.1}}
\put(3.3,-0.5){$\infty$}
\put(1.0,2.5){$\cM_{0,5}$}
\put(1.6,2.3){\vector(0,-1){1}}
\put(1.8,1.7){$p_2$}
\put(2.65,0){\circle*{0.1}}
\put(2.55,-0.5){$z_1$}
\put(2.65,0.3){\line(0,1){2.5}}
\put(2.75,1.7){\shortstack[l]{$p_2^{-1}(z_1)$\\\quad $\cong \bP^1\setminus\{0,1,\infty,z_1^{-1}\}$}}
\end{picture}

\noindent
From this, we obtain the semi-direct product of the fundamental groups;
\begin{align*}
          \pi_1(\cM_{0,5},(z_1,z_2)) \cong  \pi_1(\bP^1\setminus\{0,1,\infty,z_1^{-1}\},z_2) 
                                                 \rtimes \pi_1(\cM_{0,4},z_1).
\end{align*}
Taking Lie algebras, we have
\begin{align*}
       \fX = \C\{X_2,X_{22},X_{12}\}\oplus \C\{X_1,X_{11}\}
\end{align*}
($\C\{X_2,X_{22},X_{12}\}$ is a Lie ideal). We have similar decomposition
\begin{align*}
\fX = \C\{X_1,X_{11},X_{12}\}\oplus \C\{X_2,X_{22}\}
\end{align*}
($\C\{X_1,X_{11},X_{12}\}$ is a Lie ideal).

\begin{prop}\label{prop:decompo_X}
The following decomposition holds:
\begin{align}
   & \fX= \C\{X_1,X_{11},X_{12}\}\oplus \C\{X_2,X_{22}\} = \C\{X_2,X_{22},X_{12}\}\oplus \C\{X_1,X_{11}\}, \\
   & \hspace{20mm}  \cU(\fX)= \cU(\C\{X_1,X_{11},X_{12}\})\otimes \cU(\C\{X_2,X_{22}\}) \nonumber \\
   & \hspace{28mm}          = \cU(\C\{X_2,X_{22},X_{12}\})\otimes \cU(\C\{X_1,X_{11}\}) 
\end{align}
Moreover, $\C\{X_1,X_{11},X_{12}\}$, $\C\{X_2,X_{22},X_{12}\}$ are Lie ideals of $\fX$.
\end{prop}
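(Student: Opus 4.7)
The plan is geometric, based on the fibration structure recalled immediately above the proposition. For the decomposition $\fX = \C\{X_2,X_{22},X_{12}\}\oplus\C\{X_1,X_{11}\}$, I would apply the projection $p_2:\cM_{0,5}\to\cM_{0,4}$, $(z_1,z_2)\mapsto z_1$, which is a locally trivial fibration with fiber $\bP^1\setminus\{0,1,\infty,z_1^{-1}\}$. The resulting long exact homotopy sequence degenerates into the split short exact sequence
\[
1\longrightarrow\pi_1(\bP^1\setminus\{0,1,\infty,z_1^{-1}\})\longrightarrow\pi_1(\cM_{0,5})\longrightarrow\pi_1(\cM_{0,4})\longrightarrow 1,
\]
where the splitting exists because $\pi_1(\cM_{0,4})$ is a free group. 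Under the identifications already fixed in the excerpt, the three fiber meridians correspond to $X_2,X_{22},X_{12}$ and the two base meridians to $X_1,X_{11}$. Applying the associated-graded functor for the lower central series turns this semi-direct product of groups into a semi-direct sum of Lie algebras, giving the claimed decomposition with $\C\{X_2,X_{22},X_{12}\}$ an ideal (from the normal fiber subgroup) and $\C\{X_1,X_{11}\}$ a free complementary subalgebra on two generators; freeness on exactly the listed generators follows from the classical Magnus theorem that the associated graded of a free group for the lower central series is the free Lie algebra on the images of its free generators, applied to the base and the fiber separately.

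For the second decomposition $\fX=\C\{X_1,X_{11},X_{12}\}\oplus\C\{X_2,X_{22}\}$ I would proceed symmetrically via the companion projection $(z_1,z_2)\mapsto z_2$; inspection of the singular divisor $D$ shows that this is again a locally trivial fibration, now with fibers $\bP^1\setminus\{0,1,\infty,z_2^{-1}\}$ whose meridians correspond to $X_1,X_{11},X_{12}$ and with base meridians $X_2,X_{22}$. The same split-exact-sequence argument delivers the decomposition and its ideal property. As a purely algebraic sanity check, both ideal properties can be re-derived directly from \eqref{IR2}: one need only verify stability of each ideal under bracketing with the two complementary generators, and the only mildly non-trivial brackets $[X_1,X_{12}]$ and $[X_2,X_{12}]$ are rewritten using the identification chain $[X_{11},X_{22}]=[-X_{11},X_{12}]=[X_{22},X_{12}]=[-X_1+X_2,X_{12}]$.

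The enveloping-algebra factorizations then follow by Poincar\'e-Birkhoff-Witt: given a vector-space decomposition $\fX=\fI\oplus\fS$ with $\fI$ a Lie ideal and $\fS$ a Lie subalgebra, ordering a PBW basis so that $\fI$-generators precede $\fS$-generators makes the multiplication map $\cU(\fI)\otimes\cU(\fS)\to\cU(\fX)$ a linear isomorphism, and the two tensor decompositions in the proposition are this construction applied to the two decompositions of $\fX$. The main technical point, and the only place where real content is needed, is the passage from the split short exact sequence of fundamental groups to the semi-direct sum at the Lie-algebra level, and in particular the verification that the section lifts to a \emph{free} Lie subalgebra on exactly the two specified generators; everything else in the proof is formal.
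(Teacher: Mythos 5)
Your proposal is correct and follows essentially the same route as the paper, which derives the proposition from the fibration $p_2:\cM_{0,5}\to\cM_{0,4}$ and the resulting semi-direct product decomposition of fundamental groups, passed to Lie algebras via the lower central series (the paper leaves the PBW step and the second, symmetric fibration implicit, which you spell out). Your algebraic cross-check of the ideal property via the relations \eqref{IR2} is a useful addition but not a departure in method.
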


\subsection{Reduced bar algebra}

Now let
\begin{equation*}
S=S(\xi_1,\xi_{11},\xi_2,\xi_{22},\xi_{12})=(\C\langle\xi_1,\xi_{11},\xi_2,\xi_{22},\xi_{12}\rangle,\sh,\bnull)
\end{equation*}
be a shuffle algebra generated by $\xi_1,\dots,\xi_{12}$. We say that an element
\begin{equation*}
       \varphi=\sum_{I=\{i_1,\ldots,i_s\}} c_i \omega_{i_1}\cdots\omega_{i_s}
\end{equation*}
($\omega_{i_k} \in \{\xi_1,\ldots,\xi_{12}\}$) 
satisfies Chen's integrability condition (CIC) if and only if, for $1\le\forall l \le s-1$, 
\begin{align}\tag{CIC}\label{Chen's_integrability}
   \sum_I c_I \; \omega_{i_1}\otimes\cdots\otimes\omega_{i_l}\wedge\omega_{i_{l+1}}\otimes\cdots
               \otimes\omega_{i_s}=0
\end{align}
holds as a multiple differential form \cite{C2}.
Let $\cB$ be a subalgebra of $S$ of elements satisfying \eqref{Chen's_integrability}: 
\begin{align}
        \cB=\{\varphi \in S\;|\; \varphi \text{ satisfies \eqref{Chen's_integrability}}\}.
\end{align}
We refer to $\cB$ as the reduced bar algebra.

For $\varphi \in \cB$, the iterated integral $\ds \int_{(a_1,a_2)}^{(z_1,z_2)} \varphi$ 
depends only on the homotopy class of the integral contour and defines 
a many-valued analytic function on $\bP^1 \times \bP^1 \setminus D$.

As Hopf algebras, $\cU$ and $\cB$ are dual to each other.

\begin{prop}[\cite{OU}]\label{prop:bar}
There exist isomorphisms of shuffle algebras
\begin{align}
   \iota_{1\otimes2}&: \cB \overset{\cong}{\longrightarrow} 
  S(\xi_1,\xi_{11},\widetilde{\xi}^{(1)}_{12})\otimes S(\xi_2,\xi_{22}),\\
   \iota_{2\otimes1}&: \cB \overset{\cong}{\longrightarrow} 
  S(\xi_2,\xi_{22},\widetilde{\xi}^{(2)}_{12})\otimes S(\xi_1,\xi_{11})
\end{align}
where $\ds \widetilde{\xi}^{(1)}_{12}=\frac{z_2dz_1}{1-z_1z_2}, \ 
\widetilde{\xi}^{(2)}_{12}=\frac{z_1dz_2}{1-z_1z_2} \quad (\widetilde{\xi}^{(1)}_{12}+
\widetilde{\xi}^{(2)}_{12}=\xi_{12})$. 
The isomorphism $\iota_{1\otimes2}$ is defined through the following procedure:\\
$(i)$ \  For $\varphi \in \cB$, pick up the terms 
         $\varphi_1\varphi_2 \in S(\xi_1,\xi_{11},\xi_{12})\cdot S(\xi_2,\xi_{22})$.\\
$(ii)$ \  change each term $\varphi_1\varphi_2$ to 
         $\varphi_1 \otimes \varphi_2 \in S(\xi_1,\xi_{11},\xi_{12})\otimes S(\xi_2,\xi_{22})$.\\
$(iii)$ \  replace $\xi_{12}$ to $\widetilde{\xi}_{12}^{(1)}$.\\
$\iota_{2\otimes1}$ is similarly defined. 
\end{prop}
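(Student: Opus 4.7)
The plan is to combine the algebraic decomposition of $\fX$ from Proposition~\ref{prop:decompo_X} with the geometric realization of $\cB$ via iterated integrals along a composite path adapted to the fibration structure of $\cM_{0,5}$. By Proposition~\ref{prop:decompo_X}, $\fX = \C\{X_1,X_{11},X_{12}\}\oplus\C\{X_2,X_{22}\}$ with the first summand a Lie ideal and each factor free on its generators, so by PBW
\begin{align*}
\cU(\fX) \;\cong\; \cU(\C\{X_1,X_{11},X_{12}\})\,\otimes\,\cU(\C\{X_2,X_{22}\}).
\end{align*}
Each factor on the right is an enveloping algebra of a free Lie algebra, hence a free tensor algebra, whose graded Hopf-dual is a free shuffle algebra. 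Hopf duality of $\cU$ and $\cB$ therefore yields an abstract shuffle-algebra isomorphism $\cB \cong S_3\otimes S_2$, where $S_3,S_2$ are free shuffle algebras on $3$ and $2$ letters respectively. The task is to identify these letters with the explicit one-forms $\xi_1,\xi_{11},\widetilde{\xi}^{(1)}_{12}$ and $\xi_2,\xi_{22}$ in the case of $\iota_{1\otimes 2}$.

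To realize $\iota_{1\otimes 2}$ concretely I would use the fibration $p:\cM_{0,5}\to\cM_{0,4}$, $(z_1,z_2)\mapsto z_2$, together with the composite path $\gamma=\gamma_F\cdot\gamma_B$ from $(0,0)$ to $(z_1,z_2)$, where $\gamma_B$ runs along $\{z_1=0\}$ from $(0,0)$ to $(0,z_2)$ and $\gamma_F$ runs along the fiber $\{z_2=\mathrm{const.}\}$ from $(0,z_2)$ to $(z_1,z_2)$. On $\gamma_B$ one has $dz_1=0$ and $\xi_{12}|_{z_1=0}=0$, so only $\xi_2,\xi_{22}$ survive; on $\gamma_F$ one has $dz_2=0$ and $\xi_{12}|_{dz_2=0}=\widetilde{\xi}^{(1)}_{12}$, so only $\xi_1,\xi_{11},\widetilde{\xi}^{(1)}_{12}$ survive. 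For $\varphi\in\cB$, \eqref{Chen's_integrability} guarantees homotopy-invariance of $\int_\gamma\varphi$, and Chen's composite-path formula
\begin{align*}
\int_{\gamma_F\cdot\gamma_B} w \;=\; \sum_{w=w_1 w_2}\int_{\gamma_F} w_1 \cdot \int_{\gamma_B} w_2
\end{align*}
applied to each word $w$ retains only those splittings with $w_1\in(\xi_1,\xi_{11},\xi_{12})^*$ and $w_2\in(\xi_2,\xi_{22})^*$, with $\xi_{12}$ restricted to $\widetilde{\xi}^{(1)}_{12}$ on the $\gamma_F$ side. This is precisely the three-step procedure (i)--(iii) of the statement.

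The remaining checks are that $\iota_{1\otimes 2}$ is a shuffle-algebra homomorphism and a bijection. The homomorphism property follows because products of iterated integrals along $\gamma$ dualize to shuffles and the tensor splitting along the two segments makes the shuffle product act on the two factors separately. Surjectivity onto $S(\xi_1,\xi_{11},\widetilde{\xi}^{(1)}_{12})\otimes S(\xi_2,\xi_{22})$ is obtained by lifting a target element back to $S$ through concatenation (with $\widetilde{\xi}^{(1)}_{12}\mapsto\xi_{12}$) and adding lower-degree corrections to enforce \eqref{Chen's_integrability}; injectivity then follows from the degree-by-degree dimension count furnished by the abstract PBW-dual isomorphism. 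The isomorphism $\iota_{2\otimes 1}$ is proved by the parallel argument with the dual fibration $(z_1,z_2)\mapsto z_1$ and the roles of $z_1,z_2$ interchanged.

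The main obstacle is establishing injectivity of the prefix-suffix extraction, i.e.\ that the terms in $S(\xi_1,\xi_{11},\xi_{12})\cdot S(\xi_2,\xi_{22})$ alone determine $\varphi\in\cB$. Concretely, one must show that any word of $\cB$ with an ``off-diagonal'' interleaving of the two letter sets can be rewritten, using \eqref{Chen's_integrability} together with the Arnold relations $\xi_1\wedge\xi_{11}=0$, $\xi_2\wedge\xi_{22}=0$, $(\xi_1+\xi_2)\wedge\xi_{12}=0$, and $\xi_{11}\wedge\xi_{12}+\xi_{22}\wedge(\xi_{11}-\xi_{12})-\xi_2\wedge\xi_{12}=0$ of $\cM_{0,5}$, as a linear combination of prefix-suffix words modulo corrections of strictly smaller degree. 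The dimension match from the PBW-dual step guarantees that the induction closes, but carrying out the combinatorics transparently -- which is exactly what pins down the identification $X_{12}\leftrightarrow\widetilde{\xi}^{(1)}_{12}$ rather than $X_{12}\leftrightarrow\xi_{12}$ -- is the substantive technical content of the proof.
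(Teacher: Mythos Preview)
The paper does not supply its own proof of this proposition: it is stated with a citation to \cite{OU} and no argument is given here. So there is no in-paper proof to compare against directly.

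That said, your outline is coherent and in fact matches the viewpoint the present paper adopts elsewhere. Your composite path $\gamma_F\cdot\gamma_B$ is exactly the contour the authors call $C_{1\otimes 2}$ in Section~6.1, and your computation that on the base segment only $\xi_2,\xi_{22}$ survive while on the fiber segment $\xi_{12}$ restricts to $\widetilde{\xi}^{(1)}_{12}$ is precisely the content of Proposition~\ref{prop:GHPR}, where the authors write $\int_{(0,0)}^{(z_1,z_2)}\varphi=\int_{C_{1\otimes 2}}\varphi=\int_{1\otimes 2}\iota_{1\otimes 2}(\varphi)$. Your use of Proposition~\ref{prop:decompo_X} together with Hopf duality to obtain the abstract dimension count is also the natural strategy, and is consistent with the paper's remark that $\cU$ and $\cB$ are dual Hopf algebras.

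One point to tighten: the iterated-integral argument you sketch shows that the map ``integrate along $C_{1\otimes 2}$'' factors through the procedure (i)--(iii), but strictly speaking this identifies $\iota_{1\otimes 2}$ only after composing with the integration map. To conclude that (i)--(iii) is itself a well-defined algebra map on $\cB$ (independent of the choice of representative modulo \eqref{Chen's_integrability}) and is bijective, you are right that one needs either the explicit rewriting via the Arnold relations or the PBW-dual dimension argument; you have correctly flagged this as the substantive step. The full combinatorial verification is carried out in \cite{OU}.
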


Let $\cB^0$ be a subalgebra of elements ending with other than $\xi_1,\xi_2$: 
\begin{equation*}
\cB^0=\{\varphi \in \cB\;|\; \varphi=\sum c_I \omega_{i_1}\cdots\omega_{i_s},\; \omega_{i_s}\neq \xi_1,\xi_2\}.
\end{equation*}
For $\varphi \in \cB^0$, we can put an iterated integral 
$\ds \int_{(0,0)}^{(z_1,z_2)} \varphi$.\\

\begin{prop}[\cite{OU}]\label{prop:bar0}
We have isomorphisms of shuffle algebras
\begin{align}
\iota_{1\otimes2}&: \cB^0 \overset{\cong}{\longrightarrow} S^0(\xi_1,\xi_{11},\widetilde{\xi}^{(1)}_{12})\otimes S^0(\xi_2,\xi_{22}),\\
\iota_{2\otimes1}&: \cB^0 \overset{\cong}{\longrightarrow} S^0(\xi_2,\xi_{22},\widetilde{\xi}^{(2)}_{12})\otimes S^0(\xi_1,\xi_{11}),
\end{align}
where $S^0(A)=\{\varphi \in S(A)\;|\; \varphi=\sum c_I \omega_{i_1}\cdots\omega_{i_s},\; \omega_{i_s}\neq \xi_1,\xi_2\}$.
\end{prop}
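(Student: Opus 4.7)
The plan is to deduce this proposition from Proposition \ref{prop:bar} by restriction of the isomorphisms already established there. Since $\iota_{1\otimes 2}: \cB \overset{\cong}{\longrightarrow} S(\xi_1, \xi_{11}, \widetilde{\xi}_{12}^{(1)}) \otimes S(\xi_2, \xi_{22})$ is already a shuffle-algebra isomorphism, it suffices to verify that $\iota_{1\otimes 2}(\cB^0) = S^0(\xi_1, \xi_{11}, \widetilde{\xi}_{12}^{(1)}) \otimes S^0(\xi_2, \xi_{22})$; the assertion for $\iota_{2\otimes 1}$ then follows symmetrically.

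For the inclusion $\iota_{1\otimes 2}(\cB^0) \subseteq S^0 \otimes S^0$, I would take $\varphi \in \cB^0$ and examine the terms $\varphi_1 \varphi_2$ extracted by the three-step procedure of Proposition \ref{prop:bar}. Each word of $\varphi$ ends with $\xi_{11}$, $\xi_{22}$, or $\xi_{12}$, yielding three cases. A word ending in $\xi_{11}$ or $\xi_{12}$ produces $\varphi_2 = \bnull$ and $\varphi_1$ ending in $\xi_{11}$ or (after the change of variable) $\widetilde{\xi}_{12}^{(1)}$, landing in $S^0 \otimes S^0$. A word ending in $\xi_{22}$ gives $\varphi_2$ ending in $\xi_{22}$, so $\varphi_2 \in S^0$; to rule out $\varphi_1$ ending in $\xi_1$, I would apply Chen's integrability condition at the transition $\xi_1 \xi_{22}$ in a term $u \xi_1 \xi_{22}$: since $\xi_1 \wedge \xi_{22} \neq 0$, CIC forces the presence of a companion term $u \xi_{22} \xi_1$ of equal coefficient, which ends in $\xi_1$ and contradicts $\varphi \in \cB^0$.

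For the reverse inclusion, I would invoke Proposition \ref{prop:reutanuer} applied to each tensor factor to write
\[
S(\xi_1, \xi_{11}, \widetilde{\xi}_{12}^{(1)}) \otimes S(\xi_2, \xi_{22}) = \bigl(S^0 \otimes S^0\bigr)[\xi_1 \otimes \bnull,\, \bnull \otimes \xi_2]
\]
as a shuffle-polynomial algebra over $S^0 \otimes S^0$ in the two $\sh$-commuting variables $\xi_1 \otimes \bnull$ and $\bnull \otimes \xi_2$, and then establish an analogous Reutenauer-type decomposition $\cB = \cB^0[\xi_1, \xi_2]$ for the bar algebra. Because $\iota_{1\otimes 2}$ is a shuffle-algebra isomorphism sending $\xi_1 \mapsto \xi_1 \otimes \bnull$ and $\xi_2 \mapsto \bnull \otimes \xi_2$, matching these two polynomial decompositions at degree zero in $\xi_1, \xi_2$ yields the desired bijection $\cB^0 \overset{\cong}{\longrightarrow} S^0 \otimes S^0$.

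The main obstacle is establishing the Reutenauer-type decomposition $\cB = \cB^0[\xi_1, \xi_2]$, which must be proved compatibly with Chen's integrability condition rather than as a bare shuffle algebra: one must show that a general $\varphi \in \cB$ admits a unique representation as a $\sh$-polynomial in $\xi_1, \xi_2$ with coefficients in $\cB^0$, and that each coefficient itself satisfies CIC. The local CIC argument at $\xi_1 \xi_{22}$ used for the forward inclusion is relatively concrete, but controlling all iterated trailing occurrences of $\xi_1, \xi_2$ through the bar relations will require careful bookkeeping.
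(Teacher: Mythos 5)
The paper does not actually prove this proposition; it is quoted from the companion paper \cite{OU}, so there is no in-text argument to compare yours against. That said, your overall strategy --- restrict the isomorphism of Proposition \ref{prop:bar} and show $\iota_{1\otimes2}(\cB^0)=S^0\otimes S^0$ --- is the natural one, and your reduction of surjectivity to the two facts (a) $\iota_{1\otimes2}(\cB^0)\subseteq S^0\otimes S^0$ and (b) $\cB=\cB^0[\xi_1,\xi_2]$ is logically sound: given (a) and (b), uniqueness of the polynomial decomposition of $S\otimes S$ over $S^0\otimes S^0$ in the shuffle variables $\xi_1\otimes\bnull$, $\bnull\otimes\xi_2$ forces every element of $S^0\otimes S^0$ to equal $\iota_{1\otimes2}(x_{00})$ for some $x_{00}\in\cB^0$.

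However, both ingredients are left with genuine gaps. For (a), your CIC argument only works when the offending transition $\xi_1\xi_{22}$ occupies the last two slots of the word: only then does the forced companion term $u\xi_{22}\xi_1$ end in $\xi_1$ and contradict $\varphi\in\cB^0$. If the picked-up term is $u\xi_1\varphi_2$ with $\varphi_2\in S(\xi_2,\xi_{22})$ of length at least $2$, the companion produced by CIC at that position is $u\xi_{22}\xi_1 w$ or $u\xi_2\xi_1 w$ with $w\neq\bnull$, which does not end in $\xi_1$ and contradicts nothing; you must iterate, pushing the stray $\xi_1$ rightward through $\varphi_2$ letter by letter until it reaches the final slot, checking at each step that no Arnold relation can absorb the wedges $\xi_1\wedge\xi_2$ or $\xi_1\wedge\xi_{22}$ (they cannot, since neither occurs in the listed relations, but this is exactly the point that needs stating) and that the newly forced terms cannot themselves be cancelled by yet other words. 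For (b), the decomposition $\cB=\cB^0[\xi_1,\xi_2]$ compatible with Chen's integrability condition is not a corollary of Proposition \ref{prop:reutanuer}; it is a structural statement about the reduced bar algebra of essentially the same depth as the proposition being proved, and you correctly identify it as the main obstacle but do not supply it. As it stands the proposal is a plausible reduction with the two hardest steps outstanding, not a proof.
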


\subsection{Fundamental solution of KZE2}

Let $\varOmega_0=\xi_1 X_1+\xi_2 X_2$ (the singular part of $\varOmega$ at $(0,0)$) 
and $\varOmega'=\varOmega-\varOmega_0$ (the regular part of $\varOmega$ at $(0,0)$).

\begin{prop}\label{prop:KZE2}
\ref{KZE2} has a unique solution $\cL(z_1,z_2)$ satisfying the asymptotic condition: 
\begin{align*}
    & \cL(z_1,z_2)=\hcL(z_1,z_2)z_1^{X_1}z_2^{X_2}, \\
    & \hcL(z_1,z_2) \ \text{is holomorphic at $(0,0)$ and $\hcL(0,0)=\bunit$}.
\end{align*}
\end{prop}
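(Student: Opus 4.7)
The plan is to adapt the proof of Proposition~\ref{prop:KZE1} to the two-variable case by deriving a recursion for the homogeneous components of $\hcL$ and solving it via iterated integration from $(0,0)$. First I would substitute $\cL = \hcL \cdot z_1^{X_1}z_2^{X_2}$ into \eqref{KZE2}. Since \eqref{IR2} includes $[X_1,X_2]=0$, the factors $z_1^{X_1}$ and $z_2^{X_2}$ commute, giving
\begin{align*}
d\bigl(z_1^{X_1}z_2^{X_2}\bigr) = (\xi_1 X_1 + \xi_2 X_2)\,z_1^{X_1}z_2^{X_2} = \varOmega_0\,z_1^{X_1}z_2^{X_2}.
\end{align*}
Cancelling this common factor on the right yields the reduced Pfaffian system
\begin{align*}
d\hcL = \xi_1[X_1,\hcL] + \xi_2[X_2,\hcL] + \varOmega'\hcL, \qquad \hcL(0,0) = \bunit,
\end{align*}
whose integrability is inherited from that of \eqref{KZE2} (in turn a consequence of the Arnold and Ihara relations).

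Next I would expand $\hcL = \sum_{s\ge 0}\hcL_s$ by total degree in $\fX$ and match degrees, arriving at $\hcL_0 = \bunit$ and
\begin{align*}
d\hcL_{s+1} = \xi_1\,\ad(X_1)(\hcL_s) + \xi_2\,\ad(X_2)(\hcL_s) + \bigl(\xi_{11}X_{11} + \xi_{22}X_{22} + \xi_{12}X_{12}\bigr)\hcL_s.
\end{align*}
Defining $\hcL_{s+1}$ inductively as the iterated integral of this closed one-form from $(0,0)$, each $\hcL_s$ becomes a linear combination, with coefficients in $\cU_s$, of iterated integrals of words in $\xi_1,\xi_{11},\xi_2,\xi_{22},\xi_{12}$ ending in $\xi_{11}$, $\xi_{22}$, or $\xi_{12}$. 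Such words lie in $\cB^0$, so the iterated integrals from $(0,0)$ are holomorphic at $(0,0)$, with $\hcL_s(0,0)=0$ for $s\ge 1$. Uniqueness follows termwise because the recursion determines each $\hcL_s$ from $\hcL_{s-1}$.

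The main obstacle is to verify that the one-forms $\xi_1[X_1,\hcL_s]$ and $\xi_2[X_2,\hcL_s]$, which carry $dz_i/z_i$ singularities, are actually integrable from $(0,0)$. The two-variable analogue of the one-variable cancellation $[X_0,\hcL_s(0)]=0$ is as follows. On the axis $\{z_1=0\}$ the one-forms $\xi_1$, $\xi_{11}$, and $\xi_{12}$ all restrict to zero, so by induction $\hcL_s(0,z_2)$ is built purely from iterated integrals of $\xi_2$ and $\xi_{22}$ and therefore lies in the subalgebra of $\cU$ generated by $X_2,X_{22}$. The relations $[X_1,X_2]=[X_1,X_{22}]=0$ from \eqref{IR2} then force $[X_1,\hcL_s(0,z_2)]=0$, so $\xi_1[X_1,\hcL_s]$ extends holomorphically across $z_1=0$; the symmetric relations $[X_2,X_1]=[X_{11},X_2]=0$ handle $\xi_2[X_2,\hcL_s]$ at $z_2=0$. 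This secures convergence of the iterated integrals and completes the construction.
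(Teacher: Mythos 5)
Your proposal is correct and follows the same overall scheme as the paper's proof of Proposition~\ref{prop:KZE2} (a degree-by-degree recursion $d\hcL_{s+1}=[\varOmega_0,\hcL_s]+\varOmega'\hcL_s$ solved by iterated integration from the origin), but the crucial step --- justifying that these integrals exist and are holomorphic at $(0,0)$ --- is handled by a genuinely different argument. The paper outsources this to a lemma from \cite{OU}, namely that $\left(\ad(\varOmega_0)+\mu(\varOmega')\right)^s(\bnull\otimes\bunit)$ lies in $\cB^0_s\otimes\cU_s$: the combination of one-form words attached to each monomial of $\cU_s$ satisfies Chen's integrability condition and ends in a letter regular at the origin, which gives homotopy invariance and convergence simultaneously, at the price of importing the reduced bar algebra machinery. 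You instead show directly that the closed one-form $\eta_{s+1}=[\varOmega_0,\hcL_s]+\varOmega'\hcL_s$ is holomorphic near $(0,0)$, using the relations $[X_1,X_2]=[X_{11},X_2]=[X_1,X_{22}]=0$ of \eqref{IR2} to force $[X_1,\hcL_s]$ to vanish on $\{z_1=0\}$ and $[X_2,\hcL_s]$ on $\{z_2=0\}$, so that the apparent $dz_i/z_i$ singularities cancel; this is more elementary and self-contained, and it simultaneously yields $\hcL_{s+1}(0,0)=0$ and the induction hypothesis that $\hcL_{s+1}$ restricted to each axis lies in the appropriate commuting subalgebra. One sentence in your middle paragraph is inaccurate and should be deleted: an individual word ending in $\xi_{11}$, $\xi_{22}$, or $\xi_{12}$ need \emph{not} lie in $\cB^0$ --- for instance $\xi_1\xi_{22}$ violates Chen's condition since $\xi_1\wedge\xi_{22}\neq 0$, and its iterated integral from $(0,0)$ is neither path-independent nor convergent along every path --- so word-by-word membership in $\cB^0$ cannot be the reason the integrals make sense; only the full integrable combination (the paper's route) or the holomorphy of $\eta_{s+1}$ itself (your final paragraph) does the job. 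Since your closing argument supplies exactly the latter and does not rely on the faulty sentence, the proof stands.
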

\begin{proof}
Since the fundamental relations \eqref{IR2} of $\fX$ is homogeneous, $\cU$ has the canonical gradation: 
$\ds \cU= \bigoplus_{s=0}^{\infty} \cU_s$. Similarly, the reduced bar algebra $\cB$ and the subalgebra
$\cB^0$ are graded: $\ds \cB= \bigoplus_{s=0}^{\infty} \cB_s, \ 
\cB^0= \bigoplus_{s=0}^{\infty} \cB_s^0 \quad (B_s^0=B_s \cap B^0)$.

Let $\ds \hcL(z_1,z_2)=\sum_{s=0}^\infty \hcL_s(z_1,z_2), \ (\hcL_s \in \cU_s)$. 
Then we have the following recursive relation:
\begin{align*}
         d\hcL_{s+1}(z_1,z_2)&=[\varOmega_0,\hcL_s]+\varOmega'\hcL_s.
\end{align*}

\begin{lem}[\cite{OU}, Proposition 17]
We have 
\begin{align*}
  \left(\ad(\varOmega_0)+\mu(\varOmega')\right)^s(\bnull \otimes \bunit) \in \cB^0_s\otimes \cU_s
\end{align*}
where we set $\ad(\omega\otimes X)(\varphi\otimes A)=\omega\varphi \otimes [X,A]$, 
$\mu(\omega\otimes X)(\varphi\otimes A)=\omega\varphi \otimes XA$.
\end{lem}

Hence we have $\hcL_0(z)=\bunit$, and for $s \ge 1$
\begin{align}\label{eq:Ls}
     \hcL_s(z_1,z_2)=\int_{(0,0)}^{(z_1,z_2)} 
     \left(\ad(\varOmega_0)+\mu(\varOmega')\right)^s(\bnull \otimes \bunit).
\end{align}
This shows that the fundamental solution $\cL(z_1,z_2)$ exists and is unique.
\end{proof}

The solution $\cL(z_1,z_2)$ is called the fundamental solution normalized at $(0,0)$ of \eqref{KZE2} .
We should observe that $\cL(z_1,z_2)$ and $\hcL(z_1,z_2)$ are grouplike elements of $\widetilde{\cU}$ 
which is the completion of $\cU$ with respect to the gradation, and that $\hcL(z_1,z_2)$ is 
single valued and holomorphic in $\{(z_1,z_2) \,|\, |z_1|<1, \ |z_2|<1\}$.

\subsection{Decomposition theorem of fundamental solution}

\begin{prop}\label{prop:decomposition}
The fundamental solution $\cL(z_1,z_2)$ normalized at $(0,0)$ decomposes as
\begin{align}\tag{$\sharp$}\label{eq:decomposition}
     \cL(z_1,z_2) = \cL_{2 \otimes 1}^{(2)}(z_1,z_2)\cL_{2 \otimes 1}^{(1)}(z_1)
                  =\cL_{1 \otimes 2}^{(1)}(z_1,z_2)\cL_{1 \otimes 2}^{(2)}(z_2).
\end{align}
Here 
$\cL^{(j)}_{i\otimes j}(z_j)$ is the fundamental solution normalized at $z_j=0$ of
\begin{gather}
\frac{dG}{dz_j}=\left(\frac{X_j}{z_j}+\frac{X_{jj}}{1-z_j}\right)G\tag{$\text{KZE1}_{i\otimes j}$}\\
\cL^{(j)}_{i\otimes j}(z_j)=\hcL^{(j)}_{i\otimes j}(z_j)z_j^{X_j},\quad \hcL^{(j)}_{i\otimes j}(0)=\bunit. \notag
\end{gather}
$\cL^{(i)}_{i\otimes j}(z_1,z_2)$ is the fundamental solution normalized at $z_i=0$ of
\begin{gather}
\frac{dG}{dz_i}=\left(\frac{X_i}{z_i}+\frac{X_{ii}}{1-z_i}+\frac{z_jX_{12}}{1-z_1z_2}\right)G\tag{$\text{SE1}_{i\otimes j}$}\\
\cL^{(i)}_{i\otimes j}(z_1,z_2)=\hcL^{(i)}_{i\otimes j}(z_1,z_2)z_i^{X_i},\quad \hcL^{(i)}_{i\otimes j}\Big|_{z_i=0}=\bunit. \notag
\end{gather}

\noindent The holomorphic part $\hcL(z_1,z_2)$ has similar decomposition as follows:
\begin{align}\tag{$\sharp'$}\label{eq:decomposition'}
         \hcL(z_1,z_2)=\hcL_{2 \otimes 1}^{(2)}(z_1,z_2)\hcL_{2 \otimes 1}^{(1)}(z_1)
                      =\hcL_{1 \otimes 2}^{(1)}(z_1,z_2)\hcL_{1 \otimes 2}^{(2)}(z_2).
\end{align}
\end{prop}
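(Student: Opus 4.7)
The plan is to identify $\cL(z_1,z_2)$ with the two candidate products by appealing to the uniqueness statements of Prop \ref{prop:KZE2} and Prop \ref{prop:SE1}. I treat only the first factorization; the second follows by swapping the roles of $(z_1,X_1,X_{11})\leftrightarrow(z_2,X_2,X_{22})$. Define
\begin{align*}
M(z_1,z_2) := \cL(z_1,z_2)\cdot\bigl(\cL^{(1)}_{2\otimes 1}(z_1)\bigr)^{-1},
\end{align*}
so that the target claim $\cL = \cL^{(2)}_{2\otimes 1}\cdot \cL^{(1)}_{2\otimes 1}$ is equivalent to $M = \cL^{(2)}_{2\otimes 1}$.

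Since $\cL^{(1)}_{2\otimes 1}$ depends only on $z_1$, the $z_2$-partial of $M$ only feels the $dz_2$-component of $\varOmega$. Reading this off from $\xi_2$, $\xi_{22}$ and $\xi_{12} = (z_2\,dz_1 + z_1\,dz_2)/(1-z_1z_2)$ gives exactly the coefficient of $(\text{SE1}_{i\otimes j})$ with $(i,j)=(2,1)$, so for each fixed $z_1$, $M$ solves that Schlesinger-type equation in the variable $z_2$. Next, writing $\cL = \hcL\, z_1^{X_1} z_2^{X_2}$ and $\cL^{(1)}_{2\otimes 1} = \hcL^{(1)}_{2\otimes 1}(z_1)\, z_1^{X_1}$, the relations \eqref{IR2} give $[X_1,X_2]=[X_2,X_{11}]=0$, so $z_2^{X_2}$ commutes with $z_1^{-X_1}$ and with every element of $\C\langle\!\langle X_1,X_{11}\rangle\!\rangle$; in particular, with $\bigl(\hcL^{(1)}_{2\otimes 1}(z_1)\bigr)^{-1}$. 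A short rearrangement therefore yields
\begin{align*}
M(z_1,z_2) = \hcL(z_1,z_2)\,\bigl(\hcL^{(1)}_{2\otimes 1}(z_1)\bigr)^{-1}\,z_2^{X_2},
\end{align*}
and the normalization required by Prop \ref{prop:SE1} (holomorphic factor equal to $\bunit$ at $z_2=0$) reduces to the identity $\hcL(z_1,0) = \hcL^{(1)}_{2\otimes 1}(z_1)$.

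The main obstacle is this identity. Set $A(z_1) := \hcL(z_1,0)$. Substituting $\cL = \hcL\, z_1^{X_1} z_2^{X_2}$ into the $dz_1$-component of \eqref{KZE2} and isolating $\partial_{z_1}\hcL$ gives
\begin{align*}
\partial_{z_1}\hcL = \frac{1}{z_1}[X_1,\hcL] + \frac{X_{11}}{1-z_1}\hcL + \frac{z_2\, X_{12}}{1-z_1z_2}\hcL.
\end{align*}
Specializing at $z_2=0$ kills the $X_{12}$-term and leaves
\begin{align*}
\frac{dA}{dz_1} = \frac{1}{z_1}[X_1,A] + \frac{X_{11}}{1-z_1}A,\qquad A(0)=\bunit.
\end{align*}
The same ODE with the same initial condition is satisfied by $\hcL^{(1)}_{2\otimes 1}(z_1)$, obtained from $(\text{KZE1}_{i\otimes j})$ with $(i,j)=(2,1)$ in precisely the same way. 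Uniqueness of the $\tcU$-valued holomorphic solution at the regular singular point $z_1=0$ comes from the degree-by-degree recursion used in the proof of Prop \ref{prop:KZE1}, so $A = \hcL^{(1)}_{2\otimes 1}$. Propagating the pointwise normalization of $\hcL$ at $(0,0)$ out to the entire line $z_2=0$ is precisely the nontrivial step. With this in hand, Prop \ref{prop:SE1} gives $M = \cL^{(2)}_{2\otimes 1}$, establishing \eqref{eq:decomposition}; the matching relation \eqref{eq:decomposition'} for $\hcL$ then follows by dividing \eqref{eq:decomposition} on the right by $z_1^{X_1} z_2^{X_2}$ and invoking $[X_1,X_2]=0$ together with the commutations already used.
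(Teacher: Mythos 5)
Your proof is correct and takes essentially the same route as the paper: both form $H=\cL\cdot\bigl(\cL^{(1)}_{2\otimes 1}(z_1)\bigr)^{-1}$, check that it solves $(\text{SE1}_{2\otimes 1})$, use $[X_2,X_1]=[X_2,X_{11}]=0$ to put it in the normalized form $\hcL(z_1,z_2)\bigl(\hcL(z_1,0)\bigr)^{-1}z_2^{X_2}$, and conclude by the uniqueness of the normalized fundamental solution. The only cosmetic difference is that you verify the identification $\hcL(z_1,0)=\hcL^{(1)}_{2\otimes 1}(z_1)$ by restricting the $dz_1$-component of the equation to $z_2=0$, a step the paper asserts directly from the asymptotic condition.
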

\begin{proof}
From the asymptotic condition of $\ds \cL(z_1,z_2)=\hcL(z_1,z_2)z_1^{X_1}z_2^{X_2}$, it follows that
$\ds G(z_1)=\hcL(z_1,0)z_1^{X_1}$ is the fundamental solution of ($\text{KZE1}_{2\otimes 1}$) . 
Hence $G(z_1)=\cL_{2 \otimes 1}^{(1)}(z_1)$. Put
\begin{align*}
        H(z_1,z_2)=\cL(z_1,z_2)\big(\cL_{2 \otimes 1}^{(1)}(z_1)\big)^{-1}.
\end{align*}
Then $H(z_1,z_2)$ is a solution of ($\text{SE1}_{2 \otimes 1}$).  
Since $[X_2,X_1]=[X_2,X_{11}]=0$, it has the asymptotic condition as follows:
\begin{align*}
      & H(z_1,z_2)=\hat{H}(z_1,z_2)z_2^{X_2}, \\
      & \widehat{H}(z_1,z_2)=\hcL(z_1,z_2)\big(\hcL(z_1,0)\big)^{-1}, \quad 
        \widehat{H}(z_1,0)=\bunit.
\end{align*}
Therefore $H(z_1,z_2)=\cL_{2 \otimes 1}^{(2)}(z_1,z_2)$. 
\end{proof}

We call this proposition ``Decomposition theorem for the fundamental solution''.

\section{Connection Problem of KZE on $\cM_{0,5}$}
\subsection{Compactification of $\cM_{0,5}$}

A smooth compactification of $\cM_{0,5}$ is given as follows \cite{Y}:
\begin{gather}
\overline{\cM}_{0,5}=\PGL(2, \C)\Big\backslash \big((\bP^1)^5\setminus \Delta'\big), \\
\Delta'=\{(x_1,x_2,x_3,x_4,x_5) \in (\bP^1)^5\;|\; \text{at least $3$ points coincide}\}. \nonumber
\end{gather}
Then
\begin{gather}
  \overline{\cM}_{0,5}\setminus\cM_{0,5}=\bigcup_{1\le i<j\le 5}D_{ij},\\
   D_{ij}=\{[x_1,x_2,x_3,x_4,x_5]\;|\; x_i=x_j\}.
\end{gather}
The divisors $D_{ij}$'s are rational curves in $\overline{\cM}_{0,5}$ and satisfy
\begin{align*}
   D_{ij}\cap D_{kl}=
     \begin{cases}
          \emptyset & (\{i,j\}\cap\{k,l\}\neq \emptyset,\; \{i,j\}\neq\{k,l\}),\\
          \{\text{one point}\}& (\{i,j\}\cap\{k,l\}=\emptyset).
     \end{cases}
\end{align*}

Let $\overline{\cM}_{0,5}(\R)$ be the real points of $\overline{\cM}_{0,5}$:
\begin{align*}
    \cM_{0,5}(\R)=\overline{\cM}_{0,5}(\R)\setminus\bigcup_{1\le i<j\le 5}D_{ij}(\R).
\end{align*}
Then $\cM_{0,5}(\R)$ has 12 connected components, and each component is a pentagon surrounded by 
five curves $D_{ij}(\R)$: 

\vspace{15mm}
\hspace{1mm}
\begin{picture}(0,8)(0,0)
\footnotesize
\put(0,0){\includegraphics{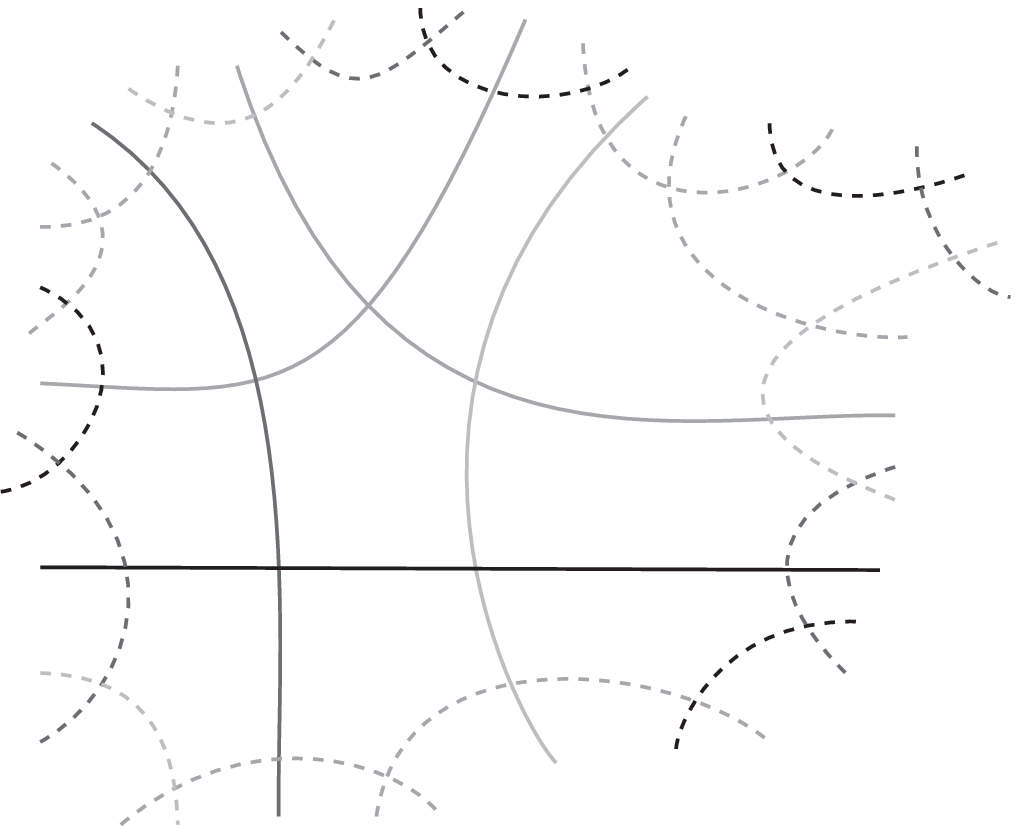}}
\put(9.1,2.5){$D_{23}$}
\put(2.6,-0.2){$D_{45}$}
\put(9.3,4.1){$D_{35}$}
\put(5.6,0.3){$D_{14}$}
\put(-0.2,4.5){$D_{12}$}
\put(-0.6,3.4){$D_{34}$}
\put(-0.1,0.7){$D_{15}$}
\put(0.9,-0.2){$D_{13}$}
\put(-0.2,1.6){$D_{24}$}
\put(8,0.7){$D_{25}$}
\put(3.6,3.8){$1$}
\put(3.6,1.6){$2$}
\put(2,3.6){$3$}
\put(2,1.6){$4$}
\put(6,3.5){$5$}
\put(6,1.9){$6$}
\put(1.5,5.4){$7$}
\put(2.5,5.8){$8$}
\put(6.2,5.5){$9$}
\put(4.6,5.8){$10$}
\put(3.5,6.7){$11$}
\put(8,5.8){$12$}
\put(8.7,2.2){$3$}
\put(8.7,3){$4$}
\put(0.6,2.1){$5$}
\put(0.6,2.9){$6$}
\put(3.2,0.2){$7$}
\put(2.2,0.2){$8$}
\put(4.6,0.6){$9$}
\put(5.9,0.9){$10$}
\put(8.5,3.8){$11$}
\put(4,0){$12$}
\put(8.5,4.5){$8$}
\put(7.2,0.7){$7$}
\put(7.7,1.3){$12$}
\put(8.6,1.75){$11$}
\put(1.5,0){$9$}
\put(0.9,0.7){$12$}
\put(0.3,1.2){$11$}
\put(0,3.6){$12$}
\put(0.4,4.1){$11$}
\put(0.4,4.7){$10$}
\put(0.3,5.2){$6$}
\put(0.5,5.7){$12$}
\put(0.5,6.3){$9$}
\put(1,6.6){$2$}
\put(1.4,7){$4$}
\put(1.4,7.5){$12$}
\put(2.05,7.4){$9$}
\put(2.7,7.6){$5$}
\put(3.05,8.1){$4$}
\put(3.6,7.9){$12$}
\put(4.4,8.3){$6$}
\put(4.7,7.8){$3$}
\put(5.4,7.7){$7$}
\put(6.1,7.8){$12$}
\put(6.2,7.3){$6$}
\put(6.5,6.9){$2$}
\put(7.2,6.7){$7$}
\put(8,7){$10$}
\put(8.6,6.6){$6$}
\put(9.5,6.7){$3$}
\put(9.7,6.2){$11$}
\put(10.1,5.6){$5$}
\put(9.1,5.2){$4$}
\put(9.1,3.4 ){$12$}
\put(1.5,2.8){$\bullet$}
\put(1.5,2.3){$\star$}
\put(8.2,2.3){$\bullet$}
\put(8.2,2.8){$\star$}
\end{picture}

\vspace{20mm}

\noindent
In the above figure, pentagons with the same number should be identified.

\subsection{Cubic coordinate system associated with a pentagon}

Let us fix the pentagon 1:

\begin{picture}(0,4.5)(0,-0.5)
\footnotesize
\put(0,0){\scalebox{0.5}{\includegraphics{m05.eps}}}
\put(1.2,0.3){$D_{23}$}
\put(2.7,1.0){$D_{14}$}
\put(2.4,2.7){$D_{35}$}
\put(0.7,3.0){$D_{12}$}
\put(-0.1,1.4){$D_{45}$}
\put(0.7,0.7){$P_0$}
\put(2.1,0.6){$P_1$}
\put(2.6,2.0){$P_2$}
\put(1.6,2.8){$P_3$}
\put(0.5,2.1){$P_4$}
\thicklines
\put(4.3,2.2){blow down}
\put(4.1,2){\vector(1,0){2}}
\put(6.5,0){\scalebox{0.45}{\includegraphics{m05_cubic.eps}}}
\put(10.5,0.3){$z_1$}
\put(9.2,4){$z_2$}
\put(8.9,-0.1){$z_1\!=\!1$}
\put(5.9,2.6){$z_2\!=\!1$}
\put(10.5,1.8){$z_1z_2\!=\!1$}
\put(6.7,0.1){$0$}
\end{picture}

Each vertex of this pentagon is given as follows:
\begin{align*}
   P_0&=[0,1,1,\infty,\infty],\quad P_1=[0,\infty,\infty,1,0],\quad P_2=[\infty,0,1,\infty,1],\\
   P_3&=[1,1,\infty,0,\infty],\quad P_4=[\infty,\infty,0,1,1].
\end{align*}
The cubic coordinate system
\begin{align}
    (z_1,z_2) &=\big(r(1,4;3.5), \ r(2,1;3.5) \big) \nonumber \\
              &=\left( \frac{(x_1-x_3)(x_4-x_5)}{(x_1-x_5)(x_4-x_3)}, \frac{(x_2-x_3)(x_1-x_5)}{(x_2-x_5)(x_1-x_3)} \right)
\end{align}
is a local coordinate system around $P_0$, and via the map
\begin{align*}
   [x_1,x_2,x_3,x_4,x_5] \ \mapsto \ (z_1,z_2)
\end{align*}
the pentagon 1 blows down to the square $\{(z_1,z_2) \,|\, 0 \le z_1 \le 1, \ 0 \le z_2 \le 1 \}$
as follows:
\begin{align*}
      & (z_1,z_2)(P_0)=(0,0),\ (z_1,z_2)(D_{23})=\{z_2=0\}, \ (z_1,z_2)(D_{45})=\{z_1=0\}, \\
      & (z_1,z_2)(D_{14})=\{z_1=1\}, \ (z_1,z_2)(D_{12})=\{z_2=1\}, \ (z_1,z_2)(D_{35})=(1,1).
\end{align*}
We should observe that $\cL(z_1,z_2)$, the fundamental solution normalized at $(z_1,z_2)=(0,0)$ of
\eqref{KZE2}, is a fundamental solution of \eqref{KZE} which is
single valued and holomorphic in a certain domain containing the pentagon 1. 

\vspace{2mm}

Put $\sigma=\begin{pmatrix}1&2&3&4&5\\5&4&1&3&2\end{pmatrix}$. 
Define $\sigma: \overline{\cM}_{0,5} \to \overline{\cM}_{0,5}$ as follows: 
For a point $P=[a_1,a_2,a_3,a_4,a_5]$, we set 
\begin{align}
     & \sigma(P)=[a_{\sigma^{-1}(1)},a_{\sigma^{-1}(2)},a_{\sigma^{-1}(3)},a_{\sigma^{-1}(4)},a_{\sigma^{-1}(5)}]\\
     & \hspace{7mm}(=[a_3,a_5,a_4,a_2,a_1]).\nonumber 
\end{align}
Then $\sigma$ is an automorphism (attached to the pentagon 1) of $\overline{\cM}_{0,5}$ 
satisfying $\sigma^5={\rm id}$ and 
\begin{align}
   \sigma(P_\alpha)=P_{\alpha+1} \ ({\rm mod} \ 5),\quad \sigma(D_{ij})=D_{\sigma(i),\sigma(j)} \quad (\forall i,j). 
\end{align}
Let $\sigma^*(x_i)=x_{\sigma(i)} \ (i=1,2,3,4,5)$ 
which induces an automorphism of the rational function field of $\overline{\cM}_{0,5}$. Put $(z_1^{(0)},z_2^{(0)})=(z_1,z_2)$. 
Then
\begin{align*}
    \begin{cases}
     z_1^{(1)}=\sigma^*(z_1^{(0)})=r(5,3;1,2)=z^{(0)}_2,\\
     {} \\
     z_2^{(1)}=\sigma^*(z_2^{(0)})=r(4,5;1,2)=\dfrac{1-z^{(0)}_1}{1-z^{(0)}_1z^{(0)}_2}
    \end{cases}
\end{align*}
are local coordinates around $P_1$. In general, we set 
\begin{align}
   \begin{cases}
     z_1^{(\alpha+1)}=\sigma^*(z_1^{(\alpha)})=z^{(\alpha)}_2,\\
     {} \\
     z_2^{(\alpha+1)}=\sigma^*(z_2^{(\alpha)})=\dfrac{1-z^{(\alpha)}_1}{1-z^{(\alpha)}_1z^{(\alpha)}_2}.
   \end{cases}
\end{align}
Then $(z_1^{(\alpha)},z_2^{(\alpha)})$ is a local coordinate system around $P_{\alpha}$, and is called the 
cubic coordinate system attached to $P_{\alpha}$.

Define the pull-back of $\omega_{ij}$ by
\begin{align}
     \sigma^*(\omega_{ij})=d\log(\sigma^*(x_i)-\sigma^*(x_j)),
\end{align}
and let $\sigma_*: \fX \to \fX$ be an induced automorphism defined through 
\begin{align}
\sum_{1\le i<j\le 5}\omega_{ij}\sigma_*(\varOmega_{ij})=\sum_{1\le i<j\le 5}\sigma^*(\omega_{ij})\varOmega_{ij},
\end{align}
namely,
\begin{align}
     \sigma_*(\varOmega_{ij})=\varOmega_{\sigma^{-1}(i),\sigma^{-1}(j)}. 
\end{align}
Then we have
\begin{align}
\varOmega=(\sigma^*\otimes \sigma_*^{-1})(\varOmega)=\sum_{1\le i<j\le 5}
                               \sigma^*(\omega_{ij})\sigma_*^{-1}(\varOmega_{ij}).
\end{align}

We represent $\varOmega$ by the cubic coordinate system attached to $P_{\alpha}$: 
Let 
\begin{align*}
&X_i^{(0)}=X_i,\quad X_{ij}^{(0)}=X_{ij} \quad (1\le i\le 2,\; 1\le i\le j\le 2),\\
&X_i^{(\alpha+1)}=\sigma_*^{-1}(X_i^{(\alpha)}),\quad X_{ij}^{(\alpha+1)}=\sigma_*^{-1}(X_{ij}^{(\alpha)}) 
\quad  (\alpha=0,1,2,3).
\end{align*}
Then $\varOmega$ reads 
\begin{equation}
\varOmega=\xi^{(\alpha)}_1 X^{(\alpha)}_1 + \xi^{(\alpha)}_{11} X^{(\alpha)}_{11} + \xi^{(\alpha)}_2 X^{(\alpha)}_2 + \xi^{(\alpha)}_{22} X^{(\alpha)}_{22} + \xi^{(\alpha)}_{12} X^{(\alpha)}_{12},
\end{equation}
where
\begin{align*}
\xi^{(\alpha)}_i=\frac{dz^{(\alpha)}_i}{z^{(\alpha)}_i}, \quad 
\xi^{(\alpha)}_{ii}=\frac{dz^{(\alpha)}_i}{1-z^{(\alpha)}_i} \quad (i=1,2), \quad 
\xi^{(\alpha)}_{12}=\frac{d(z^{(\alpha)}_1z^{(\alpha)}_2)}{1-z^{(\alpha)}_1z^{(\alpha)}_2}.
\end{align*}

\noindent
Let $\cL^{(\alpha)}=\cL^{(\alpha)}(z_1^{(\alpha)},z_2^{(\alpha)})$ be the fundamental solution 
normalized at $P_{\alpha}$ of $\text{KZE2}_{\alpha}$
\begin{align}\tag{${\rm KZE2}_{\alpha}$}\label{eq:KEZ2a}
dG=\left(\xi^{(\alpha)}_1 X^{(\alpha)}_1 + \xi^{(\alpha)}_{11} X^{(\alpha)}_{11} + \xi^{(\alpha)}_2 X^{(\alpha)}_2 + \xi^{(\alpha)}_{22} X^{(\alpha)}_{22} + \xi^{(\alpha)}_{12} X^{(\alpha)}_{12}\right)G.
\end{align}
We should observe that each $\cL^{(\alpha)}$ is a fundamental solution of \eqref{KZE} which is single valued
and holomorphic in a certain domain containing the pentagon 1.

Let us find the connection matrix $C_{\alpha}$;
\begin{align}
        \cL^{(\alpha)}(z_1^{(\alpha)},z_2^{(\alpha)})=
        \cL^{(\alpha+1)}(z_1^{(\alpha+1)},z_2^{(\alpha+1)})C_{\alpha}.
\end{align}
To this equation, we substitute the decomposition \eqref{eq:decomposition} for $\cL^{(\alpha)},\cL^{(\alpha+1)}$
\begin{align*}
  \cL^{(\alpha)}(z_1^{(\alpha)},z_2^{(\alpha)})&=\cL^{(\alpha)(2)}_{2\otimes1}(z_1^{(\alpha)},z_2^{(\alpha)})
                                                 \cL^{(\alpha)(1)}_{2\otimes1}(z_1^{(\alpha)}),\\
  \cL^{(\alpha+1)}(z_1^{(\alpha+1)},z_2^{(\alpha+1)})&=\cL^{(\alpha+1)(1)}_{1\otimes2}(z_1^{(\alpha+1)},z_2^{(\alpha+1)})
                                                       \cL^{(\alpha+1)(2)}_{1\otimes2}(z_2^{(\alpha+1)}),
\end{align*}
where $\cL^{(\beta)(j)}_{i \otimes j}(z_j^{(\beta)}) \ (\beta=\alpha, \alpha+1)$ \ is the fundamental 
solution normalized at $z_j^{(\beta)}=0$ of $\text{KZE1}_{i\otimes j}^{(\beta)}$
\begin{gather*}\tag{$\text{KZE1}_{i\otimes j}^{(\beta)}$}
     \frac{dG}{dz_j^{(\beta)}}= \left(\frac{X_j^{(\beta)}}{z_j^{(\beta)}}+
                            \frac{X_{jj}^{(\beta)}}{1-z_j^{(\beta)}}\right) G   \\
\cL^{(\beta)(j)}_{i\otimes j}(z_j)=\hcL^{(\beta)(j)}_{i\otimes j}(z_j^{(\beta)})
               \big(z_j^{(\beta)}\big)^{X_j^{(\beta)}},\quad \hcL^{(\beta)(j)}_{i\otimes j}(0)=\bunit,
\end{gather*}
and $\cL^{(\beta)(i)}_{i \otimes j}(z_1^{(\beta)},z_2^{(\beta)})$ \ is the fundamental solution 
normalized at $z_i^{(\beta)}=0$ of $\text{SE1}_{i\otimes j}^{(\beta)}$
\begin{gather*}\tag{$\text{SE1}_{i\otimes j}^{(\beta)}$}
     \frac{dG}{dz_i^{(\beta)}}=\left(\frac{X_i^{(\beta)}}{z_i^{(\beta)}}
                           + \frac{X_{ii}^{(\beta)}}{1-z_i^{(\beta)}}
                           + \frac{z_j^{(\beta)}X_{12}^{(\beta)}}{1-z_1^{(\beta)}z_2^{(\beta)}}\right)G \\
\cL^{(\beta)(i)}_{i\otimes j}(z_1^{(\beta)},z_2^{(\beta)})
 =\hcL^{(\beta)(i)}_{i\otimes j}(z_1^{(\beta)},z_2^{(\beta)})\big(z_i^{(\beta)}\big)^{X_i^{(\beta)}},
       \quad \hcL^{(\beta)(i)}_{i\otimes j}\Big|_{z_i^{(\beta)}=0}=\bunit. \notag
\end{gather*}
Then we have
\begin{align}\label{eq:contiguity}
     \cL^{(\alpha)(2)}_{2\otimes1}(z_1^{(\alpha)})C_{\alpha}^{-1}
         &=\cL^{(\alpha)(2)}_{2\otimes1}(z_1^{(\alpha)},z_2^{(\alpha)})^{-1}
           \cL^{(\alpha+1)(1)}_{1\otimes2}(z_1^{(\alpha+1)},z_2^{(\alpha+1)}) \nonumber \\
    & \hspace{30mm} \cdot \cL^{(\alpha+1)(2)}_{1\otimes2}(z_2^{(\alpha+1)}).
\end{align}
Note that $\ds z_1^{(\alpha+1)}=z_2^{(\alpha)},\; z_2^{(\alpha+1)}=
\frac{1-z_1^{(\alpha)}}{1-z_1^{(\alpha)}z_2^{(\alpha)}}$ and 
$\ds X_1^{(\alpha+1)}=X_2^{(\alpha)},\; X_2^{(\alpha+1)}=-X_{11}^{(\alpha)}$. Since the left hand side of 
\eqref{eq:contiguity} is independent of the variable $z_2^{(\alpha)}$, first taking the limit of $z_2^{(\alpha)} \to 0$, 
we have 
\begin{align*} 
    \text{RHS} & = \cL^{(\alpha+1)(2)}_{1\otimes2}(1-z_1^{(\alpha)})\big(1-z_1^{(\alpha)}\big)
                           ^{-X_{11}^{(\alpha)}} \\
               & \sim \bunit\times (1-z_1^{(\alpha)})^{-X_{11}^{(\alpha)}} \quad (z_1^{(\alpha)} \to 1).
\end{align*}
Hence RHS is the fundamental solution normalized at $z_1^{(\alpha)}=0$ of 
$\text{KZE1}_{i\otimes j}^{(\alpha)}$. 
This implies $C_{\alpha}=\varPhi_{\rm KZ}(X_1^{(\alpha)},X_{11}^{(\alpha)})$.

\begin{thm} Put $\varPhi^{(\alpha)}_{\rm KZ}=\varPhi_{\rm KZ}(X_1^{(\alpha)},X_{11}^{(\alpha)})$.
\begin{enumerate}
\item We have
\begin{align}\label{thm:connection_pentagon}
        \cL^{(\alpha)}=\cL^{(\alpha+1)}\varPhi^{(\alpha)}_{\rm KZ} \quad (\alpha=0,1,2,3,4 \pmod{5}).
\end{align}
\item  As the compatibility condition for the connection relation \eqref{thm:connection_pentagon}, 
       we obtain the pentagon relation of the Drinfel'd associator
\begin{align}\label{thm:pentagon_relation}
     \varPhi^{(4)}_{\rm KZ}\;\varPhi^{(3)}_{\rm KZ}\;\varPhi^{(2)}_{\rm KZ}\;\varPhi^{(1)}_{\rm KZ}\;\varPhi^{(0)}_{\rm KZ}
      =\bunit.
\end{align} \label{thm:connection_pentagon:2}
\end{enumerate}
\end{thm}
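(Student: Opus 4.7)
The plan is to derive (1) from the decomposition theorem (Proposition \ref{prop:decomposition}) combined with the one-variable connection result (Proposition \ref{prop:connection1}), and then to obtain (2) by iterating (1) five times around the pentagon and invoking $\sigma^5 = \mathrm{id}$.

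For (1), I begin with the observation that $\cL^{(\alpha)}$ and $\cL^{(\alpha+1)}$ are both fundamental solutions of the same integrable Pfaffian system \eqref{KZE}, hence differ by a right constant $C_\alpha \in \widetilde{\cU}$. To identify $C_\alpha$, I substitute the $2\otimes 1$ factorization of $\cL^{(\alpha)}$ and the $1\otimes 2$ factorization of $\cL^{(\alpha+1)}$ supplied by Proposition \ref{prop:decomposition} and rearrange, obtaining \eqref{eq:contiguity}. Since the left-hand side $\cL_{2\otimes 1}^{(\alpha)(1)}(z_1^{(\alpha)})C_\alpha^{-1}$ is independent of $z_2^{(\alpha)}$, I evaluate the right-hand side in the limit $z_2^{(\alpha)}\to 0$. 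Under the coordinate identifications $z_1^{(\alpha+1)}=z_2^{(\alpha)}$, $z_2^{(\alpha+1)}=(1-z_1^{(\alpha)})/(1-z_1^{(\alpha)}z_2^{(\alpha)})$ and the generator identifications $X_1^{(\alpha+1)}=X_2^{(\alpha)}$, $X_2^{(\alpha+1)}=-X_{11}^{(\alpha)}$, the singular prefactors $(z_2^{(\alpha)})^{-X_2^{(\alpha)}}$ (from $(\cL_{2\otimes 1}^{(\alpha)(2)})^{-1}$) and $(z_1^{(\alpha+1)})^{X_1^{(\alpha+1)}}$ (from $\cL_{1\otimes 2}^{(\alpha+1)(1)}$) cancel exactly, while the holomorphic parts collapse to $\bunit$ at their respective boundary values. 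What survives is $\cL_{1\otimes 2}^{(\alpha+1)(2)}(1-z_1^{(\alpha)})$, which, viewed as a function of $z_1^{(\alpha)}$, is the fundamental solution of the same one-variable equation $\text{KZE1}_{2\otimes 1}^{(\alpha)}$ that $\cL_{2\otimes 1}^{(\alpha)(1)}(z_1^{(\alpha)})$ solves, but normalized at $z_1^{(\alpha)}=1$ instead of $z_1^{(\alpha)}=0$. Proposition \ref{prop:connection1}(1) then gives $C_\alpha = \varPhi_{\rm KZ}(X_1^{(\alpha)},X_{11}^{(\alpha)}) = \varPhi^{(\alpha)}_{\rm KZ}$.

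For (2), I iterate \eqref{thm:connection_pentagon} five times to obtain
\[
\cL^{(0)} \;=\; \cL^{(5)}\,\varPhi^{(4)}_{\rm KZ}\,\varPhi^{(3)}_{\rm KZ}\,\varPhi^{(2)}_{\rm KZ}\,\varPhi^{(1)}_{\rm KZ}\,\varPhi^{(0)}_{\rm KZ}.
\]
Because $\sigma^5=\mathrm{id}$, the coordinate system $(z_1^{(5)},z_2^{(5)})$ and the generators $X^{(5)}_{\bullet}$ coincide with their index-$0$ counterparts, so $\cL^{(5)}$ satisfies the same equation with the same normalization as $\cL^{(0)}$; by uniqueness, $\cL^{(5)}=\cL^{(0)}$. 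Cancelling this invertible grouplike factor yields \eqref{thm:pentagon_relation}. The main obstacle is the limit computation in (1): one must verify that the $\sigma_*^{-1}$-induced matching between $X^{(\alpha)}_\bullet$ and $X^{(\alpha+1)}_\bullet$ lines up so that the two singular prefactors cancel precisely and that the surviving one-variable equation in $z_1^{(\alpha)}$ is genuinely $\text{KZE1}_{2\otimes 1}^{(\alpha)}$; once this bookkeeping is in place, (2) is a one-line cyclicity argument.
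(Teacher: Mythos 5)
Your proof is correct and follows the paper's argument essentially verbatim: part (1) is the same substitution of the $2\otimes 1$ and $1\otimes 2$ decompositions into $\cL^{(\alpha)}=\cL^{(\alpha+1)}C_\alpha$, the same limit $z_2^{(\alpha)}\to 0$ in \eqref{eq:contiguity} with cancellation of the singular prefactors via $X_1^{(\alpha+1)}=X_2^{(\alpha)}$, and the same identification of the surviving factor $\cL^{(\alpha+1)(2)}_{1\otimes 2}(1-z_1^{(\alpha)})$ as the fundamental solution of $\text{KZE1}^{(\alpha)}_{2\otimes 1}$ normalized at $z_1^{(\alpha)}=1$ (you state this correctly where the paper's text has an apparent typo saying ``normalized at $z_1^{(\alpha)}=0$''), after which Proposition \ref{prop:connection1} yields $C_\alpha=\varPhi^{(\alpha)}_{\rm KZ}$. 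Part (2) is likewise the paper's five-fold iteration around the pentagon with $\cL^{(5)}=\cL^{(0)}$ from $\sigma^5=\mathrm{id}$, followed by cancellation of the invertible factor $\cL^{(0)}$.
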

\begin{proof}
We show \eqref{thm:connection_pentagon:2}. Since the connection relations \eqref{thm:connection_pentagon} hold in a certain domain 
containing the pentagon 1, we have 
\begin{align*}
     \cL^{(0)}=\cL^{(1)}\varPhi^{(0)}_{\rm KZ}=\cL^{(2)}\varPhi^{(1)}_{\rm KZ}\varPhi^{(0)}_{\rm KZ}
              =  \cdots = \cL^{(0)}\varPhi^{(4)}_{\rm KZ}\varPhi^{(3)}_{\rm KZ}
                          \varPhi^{(2)}_{\rm KZ}\varPhi^{(1)}_{\rm KZ}\varPhi^{(0)}_{\rm KZ}.
\end{align*}
From this we obtain the pentagon relation \eqref{thm:pentagon_relation}.
\end{proof}

\section{Five term relation of dilogarithms}
\subsection{Iterated integral representation along $C_{1 \otimes 2}, \ C_{2 \otimes 1}$ of 
           $\cL(z_1,z_2)$}

Let $C_{1 \otimes 2}, \ C_{2 \otimes 1}$ be contours defined in the following figure:

\hspace{20mm}
\begin{picture}(0,5)(0,0)
\footnotesize
\put(0,0){\scalebox{0.5}{\includegraphics{m05_cubic.eps}}}
\put(-0.4,0.1){$(0,0)$}
\put(3.2,0.1){$(1,0)$}
\put(-0.4,3.2){$(0,1)$}
\put(3.2,3.2){$(1,1)$}
\put(4.8,0.35){$z_1$}
\put(0.3,4.6){$z_2$}
\put(2.7,2.4){$(z_1,z_2)$}
\put(0.9,2.7){$C_{1\otimes2}$}
\put(2.8,1.7){$C_{2\otimes1}$}

\thicklines
\put(0.65,0.5){\vector(1,0){2}}
\put(2.65,0.5){\vector(0,1){2}}
\put(0.65,0.5){\vector(0,1){2}}
\put(0.65,2.5){\vector(1,0){2}}
\end{picture}

\vspace{5mm}

For 
$\varphi_2 \otimes \varphi_1 \in S^0(\xi_2,\xi_{22},\widetilde{\xi}_{12}^{(2)})\otimes S^0(\xi_1,\xi_{11})$, 
we set
\begin{align}
   \int_{2\otimes1}\varphi_2 \otimes \varphi_1 := \int_{C_{2\otimes1}}\varphi_2\varphi_1 
        = \int_{z_2=0}^{z_2}\!\!\varphi_2 \int_{z_1=0}^{z_1}\!\!\!\varphi_1.
\end{align}
Similarly for 
$\psi_1 \otimes \psi_2 \in S^0(\xi_1,\xi_{11},\widetilde{\xi}_{12}^{(1)})\otimes S^0(\xi_2,\xi_{22})$, 
we set
\begin{align}
  \int_{1\otimes2}\psi_1 \otimes \psi_2 := \int_{C_{1\otimes2}}\psi_1\psi_2
               =\int_{z_1=0}^{z_1}\!\!\psi_1 \int_{z_2=0}^{z_2}\!\!\!\psi_2.
\end{align}

\begin{prop}\label{prop:GHPR}
For $\varphi \in \cB^0$, we have 
\begin{align*}
       \int_{(0,0)}^{(z_1,z_2)}\varphi&=\int_{C_{2\otimes1}}\varphi
       =\int_{2\otimes1}\iota_{2\otimes1}(\varphi)\\
&=\int_{C_{1\otimes2}}\varphi=\int_{1\otimes2}\iota_{1\otimes2}(\varphi).
\end{align*}
Hence we have
\begin{align}\label{GHPR}
        \int_{2\otimes1}\iota_{2\otimes1}(\varphi)=\int_{1\otimes2}\iota_{1\otimes2}(\varphi).
\end{align}
\end{prop}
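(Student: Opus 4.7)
The plan is to obtain the stated identity by combining two ingredients: homotopy invariance of Chen's iterated integral under the condition \eqref{Chen's_integrability}, and Chen's formula for iterated integrals along a composed path. Since $\varphi \in \cB^0$ satisfies \eqref{Chen's_integrability}, the iterated integral $\int_{(0,0)}^{(z_1,z_2)}\varphi$ depends only on the homotopy class of the contour in $\bP^1 \times \bP^1 \setminus D$. The restriction $\varphi \in \cB^0$ (no trailing $\xi_1, \xi_2$) guarantees that the integral converges at the base point $(0,0)$, even along an L-shaped path that touches a singular divisor at its corner. Both $C_{2\otimes 1}$ and $C_{1\otimes 2}$ are L-shaped paths from $(0,0)$ to $(z_1,z_2)$ lying in the closure of the pentagon, and they are homotopic in the domain of definition; hence they give the same iterated integral, namely $\int_{(0,0)}^{(z_1,z_2)} \varphi$.

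Next, I would decompose each path-integral along its two segments. On the horizontal segment of $C_{2\otimes 1}$ (where $z_2 = 0$, hence $dz_2 = 0$), the forms $\xi_2$, $\xi_{22}$ vanish identically, and $\xi_{12} = d(z_1 z_2)/(1-z_1 z_2)$ also vanishes since both $z_2 = 0$ and $dz_2 = 0$; only $\xi_1, \xi_{11}$ survive. On the vertical segment (where $z_1$ is held fixed, so $dz_1 = 0$), the forms $\xi_1, \xi_{11}$ vanish, while $\xi_{12}$ restricts to $z_1\,dz_2/(1 - z_1 z_2) = \widetilde{\xi}_{12}^{(2)}$, and $\xi_2, \xi_{22}$ are unchanged. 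Applying Chen's composition-of-path formula
\[
\int_{C_{2\otimes 1}} \omega_{i_1}\cdots\omega_{i_s}
= \sum_{k=0}^{s} \int_{\text{vertical}}\omega_{i_1}\cdots\omega_{i_k}\;
                 \int_{\text{horizontal}}\omega_{i_{k+1}}\cdots\omega_{i_s},
\]
only words of the form $\varphi_2 \cdot \varphi_1$ with $\varphi_2 \in S(\xi_2,\xi_{22},\xi_{12})$ on the left and $\varphi_1 \in S(\xi_1,\xi_{11})$ on the right can contribute; all other splittings produce a vanishing factor on one of the two segments. After the restriction $\xi_{12}\mapsto \widetilde{\xi}_{12}^{(2)}$ on the vertical segment, this is exactly the right-hand side $\int_{2\otimes 1}\iota_{2\otimes 1}(\varphi)$ once we compare with the definition of $\iota_{2\otimes 1}$ given in Proposition \ref{prop:bar0}. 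The argument for $C_{1\otimes 2}$ is symmetric, producing $\int_{1\otimes 2}\iota_{1\otimes 2}(\varphi)$ with $\xi_{12}\mapsto \widetilde{\xi}_{12}^{(1)}$.

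The equality \eqref{GHPR} then follows immediately, since both sides now represent $\int_{(0,0)}^{(z_1,z_2)}\varphi$. The main obstacle, rather than any genuine difficulty, is bookkeeping: one must match the order convention of Chen's iterated integral (here $\omega_{i_s}$ is the innermost form integrated first along the path, so the word $\varphi_2\varphi_1$ is traversed with $\varphi_1$ lying on the first segment) with the order in which the contour visits the two segments, and then confirm that the restriction of $\xi_{12}$ to the relevant coordinate axis is precisely $\widetilde{\xi}_{12}^{(1)}$ or $\widetilde{\xi}_{12}^{(2)}$. Once these identifications are set up, the proof reduces to an application of Chen's formula together with the already-established isomorphisms of Proposition \ref{prop:bar0}.
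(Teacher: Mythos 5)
Your argument is correct and is essentially the one the paper intends (the paper itself states this proposition without proof, deferring to [OU]): homotopy invariance of the iterated integral under Chen's integrability condition, Chen's composition-of-paths formula applied to the two L-shaped contours, and the observation that the restriction of the forms to each segment kills all words except those in $S(\xi_2,\xi_{22},\xi_{12})\cdot S(\xi_1,\xi_{11})$ (resp.\ $S(\xi_1,\xi_{11},\xi_{12})\cdot S(\xi_2,\xi_{22})$) with $\xi_{12}$ restricting to $\widetilde{\xi}_{12}^{(2)}$ (resp.\ $\widetilde{\xi}_{12}^{(1)}$), which reproduces the definition of $\iota_{2\otimes1}$ and $\iota_{1\otimes2}$. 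The only point you pass over lightly is that the L-shaped contours lie inside the divisors $z_2=0$ and $z_1=0$, so the equality with the generic-path integral really requires a limiting argument from nearby paths, for which the hypothesis $\varphi\in\cB^0$ is used; your bookkeeping of the integration order against the paper's convention is otherwise accurate.
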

We call \eqref{GHPR} a generalized harmonic product relation.
This provides a relation of hyperlogarithms of the type $\cM_{0.5}$  which is defined as follows:
For $\alpha_i \in \{1,z_2\}\quad (i=1,\ldots,r)$, we set
\begin{align}
     L({}^{k_1}\alpha_1\cdots{}^{k_r}\alpha_r;z_1):&
            =\int_0^{z_1}\xi_1^{k_1-1}\omega_1\cdots\xi_1^{k_r-1}\omega_r \nonumber \\
            &= \sum_{n_1>\cdots>n_r>0}\!\!\!\!\!\!
            \frac{\alpha_1^{n_1-n_2}\alpha_2^{n_2-n_3}\cdots \alpha_r^{n_r}}{n_1^{k_1} 
            \cdots n_r^{k_r}} z_1^{n_1},
\end{align}
where $\ds \omega_i=\frac{\alpha_idz_1}{1-\alpha_iz_1}$. 
This is called hyperlogarithms of the type $\cM_{0.5}$ of the main variable $z_1$. 
If $\alpha_1=\cdots=\alpha_r=1$, it reduces to a MPL of one variable;
\begin{align}
            \Li_{k_1,\dots,k_r}(z_1)=L({}^{k_1}1\cdots{}^{k_r}1 \, ;z_1).
\end{align}
If $\alpha_1=\cdots=\alpha_i=1, \ \alpha_{i+1}=\cdots=\alpha_{i+j}=z_2 \ (r=i+j)$, 
it is a MPL of two variables;
\begin{align}\label{eq:MPL2}
     \Li_{k_1,\ldots,k_r}(i,r-i;z_1,z_2)
     =\sum_{n_1>n_2\cdots>n_r>0}\frac{z_1^{n_1}z_2^{n_{i+1}}}{n_1^{k_1}\cdots n_r^{k_r}}.
\end{align}

The generalized harmonic product relations are the main subject in \cite{OU}. 
Here we only give a simple example.

Let $\varphi=\xi_{11}\xi_{12}+\xi_{22}\xi_{11}-\xi_{22}\xi_{12}-\xi_2\xi_{12} \in \cB^0$. Then 
\begin{align*}
\iota_{1\otimes2}(\varphi)&=\xi_{11}\widetilde{\xi}_{12}^{(1)}\otimes \bnull,\\
\iota_{2\otimes1}(\varphi)&=\xi_{22}\otimes\xi_{11}-\xi_{22}\widetilde{\xi}^{(2)}_{12}\otimes\bnull-\xi_2\widetilde{\xi}^{(2)}_{12}\otimes\bnull,
\end{align*}
\begin{align*}
\int_{1\otimes2}\iota_{1\otimes2}(\varphi)&=\int_0^{z_1}\xi_{11}\widetilde{\xi}_{12}^{(1)}=\Li_{1,1}(1,1;z_1,z_2),\\
\int_{2\otimes1}\iota_{2\otimes1}(\varphi)&=\int_0^{z_2}\!\!\xi_{22}\int_0^{z_1}\!\!\xi_{11}-\int_0^{z_2}\!\!\xi_{22}\widetilde{\xi}^{(2)}_{12}-\int_0^{z_2}\!\!\xi_2\widetilde{\xi}^{(2)}_{12}\\
&=\Li_1(z_2)\Li_1(z_1)-\Li_{1,1}(1,1;z_2,z_1)-\Li_{2}(0,1:z_2,z_1).
\end{align*}
Thus \eqref{GHPR} for $\varphi$ reads 
\begin{align}\label{eq:hpr_11}
         \Li_1(z_1)\Li_1(z_2)=\Li_{1,1}(1,1;z_1,z_2)+\Li_{2}(0,1;z_2,z_1)+\Li_{1,1}(1,1;z_2,z_1).
\end{align}

From Proposition \ref{prop:GHPR}, we have 
\begin{align*}
\hcL_s(z_1,z_2)&=\int_{C_{1 \otimes 2}} 
              \left(\ad(\varOmega_0)+\mu(\varOmega')\right)^s(\bnull \otimes \bunit)\\
           &=\int_{C_{1 \otimes 2}} (\iota_{1 \otimes 2}\otimes \id_{\cU})
              \left(\left(\ad(\varOmega_0)+\mu(\varOmega')\right)^s(\bnull \otimes \bunit)\right),
\end{align*}
where $\hcL_s(z_1,z_2)$ is the homogeneous degree s part of the fundamental solution 
$\cL(z_1,z_2)$ of \ref{KZE2}. Hence we have the following proposition.

\begin{prop}[\cite{OU}, Corollary 21]\label{prop:itLs}
\begin{enumerate}
\item We have
\begin{align}\label{eq:itLs1}
     \hcL_s(z_1,z_2) = \sum_{s'+s''=s} \ \sum_{W',W''}
           \int_0^{z_1} \theta^{(1)}_{1 \otimes 2}(W')
              \int_0^{z_2} \ \theta^{(2)}_{1 \otimes 2}(W'') \ \alpha(W')\alpha(W'')(\bunit)
\end{align}
where $W'$ runs over the set $\cW_{s'}^0(X_1,X_{11},X_{12})$, and 
$W''$ runs over the set $\cW_{s''}^0(X_2,X_{22})$. \ 
($\cW^0_s(\fA)=\cW^0(\fA) \cap \cU_s(\fX)$, and $\cW^0(\fA)$
stands for the set of words of the letters $\fA$ ending with other than $X_1,X_2$.) 
$\alpha: \cU(\fX) \to \End(\cU(\fX))$ is an algebra homomorphism
\begin{align*}
    \alpha: (X_1,X_{11},X_2,X_{22},X_{12}) \mapsto 
                   (\ad(X_1),\mu(X_{11}),\ad(X_2),\mu(X_{22}),\mu(X_{12})),
\end{align*}
and 
\begin{align*}
   &\theta^{(1)}_{1 \otimes 2}:\cU(\C\{X_1,X_{11},X_{12}\}) \to S(\xi_1,\xi_{11},\widetilde{\xi}_{12}^{(1)}) \\
   &\theta^{(2)}_{1 \otimes 2}:\cU(\C\{X_2,X_{22}\}) \to S(\xi_2,\xi_{22})
\end{align*}
are linear maps defined by replacing 
\begin{align*}
     \theta^{(i)}_{1 \otimes 2}(X_i)=\xi_i, \ \theta^{(i)}_{1 \otimes 2}(X_{ii})=\xi_{ii} \ \ (i=1,2),
      \ \theta^{(1)}_{1 \otimes 2}(X_{12})=\widetilde{\xi}_{12}^{(1)}.
\end{align*}
\item Similarly we  have 
\begin{align}\label{eq:itLs2}
  \hcL_s(z_1,z_2) =\sum_{s'+s''=s} \ \sum_{W',W''}
           \int_0^{z_2} \theta^{(2)}_{2 \otimes 1}(W')
              \int_0^{z_1} \ \theta^{(1)}_{2 \otimes 1}(W'') \ \alpha(W')\alpha(W'')(\bunit)
\end{align}
where $W'$ runs over the set $\cW_{s'}^0(X_2,X_{22},X_{12})$, and $W''$ runs over the set 
$\cW_{s''}^0(X_1,X_{11})$, and 
\begin{align*}
  & \theta^{(2)}_{2 \otimes 1}:\cU(\C\{(X_2,X_{22},X_{12}\}) \to S(\xi_2,\xi_{22},\widetilde{\xi}_{12}^{(2)}) \\
  & \theta^{(1)}_{2 \otimes 1}:\cU(\C\{X_1,X_{11}\}) \to S(\xi_1,\xi_{11})
\end{align*}
are linear maps defined by replacing 
\begin{align*}
     \theta^{(i)}_{2 \otimes 1}(X_i)=\xi_i, \ \theta^{(i)}_{2 \otimes 1}(X_{ii})=\xi_{ii} \ \ (i=1,2),
      \  \theta^{(2)}_{2 \otimes 1}(X_{12})=\widetilde{\xi}_{12}^{(2)}.
\end{align*}
\end{enumerate}
\end{prop}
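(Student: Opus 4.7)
The plan is to start from formula \eqref{eq:Ls} and use the display immediately preceding the statement to rewrite $\hcL_s$ as an iterated integral along the corner path $C_{1\otimes 2}$:
$$\hcL_s(z_1,z_2)=\int_{C_{1\otimes 2}}(\iota_{1\otimes 2}\otimes \id_{\cU})\bigl((\ad(\varOmega_0)+\mu(\varOmega'))^s(\bnull\otimes\bunit)\bigr).$$
This rewriting is legitimate because the integrand lies in $\cB^0_s\otimes \cU_s$ (by the lemma quoted just above) and because Proposition \ref{prop:GHPR} converts an integral of any $\cB^0$-element into an iterated double integral via $\iota_{1\otimes 2}$. Hence the task reduces to computing $(\iota_{1\otimes 2}\otimes \id_{\cU})$ on the explicit expansion and matching the result with the sum in \eqref{eq:itLs1}.

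First I would expand $\ad(\varOmega_0)+\mu(\varOmega')$ as a sum of five ``colored'' operators indexed by the letters $\xi_1,\xi_{11},\xi_2,\xi_{22},\xi_{12}$, so that its $s$-th power applied to $\bnull\otimes\bunit$ becomes a sum over colored length-$s$ words, each contributing a bar-algebra letter-string on the left and a composed $\ad/\mu$ operator (applied to $\bunit$) on the right. Group the letters into the \emph{type-$1$} class $\{\xi_1,\xi_{11},\xi_{12}\}$ (with companions $X_1,X_{11},X_{12}$) and the \emph{type-$2$} class $\{\xi_2,\xi_{22}\}$ (with companions $X_2,X_{22}$); this grouping matches the decomposition $\fX=\C\{X_1,X_{11},X_{12}\}\oplus \C\{X_2,X_{22}\}$ of Proposition \ref{prop:decompo_X}, in which the first summand is a Lie ideal. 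By Proposition \ref{prop:bar0}, $\iota_{1\otimes 2}$ isomorphically rewrites any element of $\cB^0$ as an element of $S^0(\xi_1,\xi_{11},\widetilde{\xi}_{12}^{(1)})\otimes S^0(\xi_2,\xi_{22})$, and therefore re-indexes our expansion as a double sum over $W'\in \cW^0_{s'}(X_1,X_{11},X_{12})$ and $W''\in \cW^0_{s''}(X_2,X_{22})$ with bar-algebra factor $\theta^{(1)}_{1\otimes 2}(W')\otimes \theta^{(2)}_{1\otimes 2}(W'')$, this being exactly the definition of the maps $\theta^{(i)}_{1\otimes 2}$.

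The remaining point is to show that the $\cU$-coefficient of each such $(W',W'')$-pair is precisely $\alpha(W')\alpha(W'')(\bunit)$. Since $\iota_{1\otimes 2}$ has ``sorted'' the bar side into type-$1$-first-then-type-$2$ form, the compositions of $\ad/\mu$ operators on the $\cU$ side must be reordered accordingly; this reordering uses the commutators in \eqref{IR2}, in particular the ideal property of $\C\{X_1,X_{11},X_{12}\}$, so that a type-$2$ operator pushed past a type-$1$ one produces only type-$1$ correction terms. These correction terms then cancel against the ``unsorted'' colored words on the bar side through CIC, leaving exactly the stated form. Running this argument (e.g.\ by induction on $s$) completes (1); part (2) is entirely parallel, using the dual decomposition $\fX=\C\{X_2,X_{22},X_{12}\}\oplus \C\{X_1,X_{11}\}$, the isomorphism $\iota_{2\otimes 1}$, and the contour $C_{2\otimes 1}$. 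The main obstacle is precisely this last bookkeeping step: aligning the bar-algebra sorting produced by $\iota_{1\otimes 2}$ with the $\cU$-algebra sorting produced by \eqref{IR2}, which amounts to a compatibility between Proposition \ref{prop:bar0} and Proposition \ref{prop:decompo_X}.
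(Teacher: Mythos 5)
Your first two paragraphs are exactly the paper's own derivation: it obtains \eqref{eq:itLs1} by combining \eqref{eq:Ls} with Proposition \ref{prop:GHPR}, writing
$\hcL_s(z_1,z_2)=\int_{C_{1\otimes2}}(\iota_{1\otimes2}\otimes\id_{\cU})\bigl(\left(\ad(\varOmega_0)+\mu(\varOmega')\right)^s(\bnull\otimes\bunit)\bigr)$,
and then expanding. So the route is correct and matches the source.

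However, the ``main obstacle'' you isolate in your last paragraph is a phantom, and the mechanism you propose to overcome it (reordering the $\ad/\mu$ operators via \eqref{IR2} and cancelling the corrections against unsorted words through CIC) is not what happens. By its definition in Proposition \ref{prop:bar}, $\iota_{1\otimes2}$ does not shuffle letters past one another: it \emph{picks out} the monomials that are already of the concatenated form $S(\xi_1,\xi_{11},\xi_{12})\cdot S(\xi_2,\xi_{22})$, discards all the others, and relabels $\xi_{12}$ as $\widetilde{\xi}^{(1)}_{12}$. Since each application of $\ad(\varOmega_0)+\mu(\varOmega')$ prepends a one-form on the bar side while composing the corresponding operator on the $\cU$ side, the coefficient attached to a retained monomial $\theta^{(1)}_{1\otimes2}(W')\otimes\theta^{(2)}_{1\otimes2}(W'')$ is literally $\alpha(W')\alpha(W'')(\bunit)$, in that order, from the outset; no use of \eqref{IR2}, no correction terms, and no cancellation are needed at this stage. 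The only nontrivial point --- that discarding the unsorted monomials does not change the value of the integral along $C_{1\otimes2}$ --- is precisely the content of Proposition \ref{prop:GHPR} (equivalently, the injectivity of $\iota_{1\otimes2}$ on $\cB^0$ from Proposition \ref{prop:bar0}), which you already invoked in your first paragraph. So your argument is complete once the last paragraph is deleted; as written, it ends by deferring to an unspecified induction on $s$ a step that does not exist, and if you actually tried to carry out the commutator bookkeeping you describe, you would be solving a problem the definitions have already dissolved. Part (2) is indeed strictly parallel.
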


In this proposition, we should observe that the iterated integral  
\begin{align*}
     L\big( \theta^{(1)}_{1 \otimes 2}(W') \,;\,  z_1 \big) :=
     \int_0^{z_1} \theta^{(1)}_{1 \otimes 2}(W') \quad (W' \in \cW_{s'}^0(X_1,X_{11},X_{12}))
\end{align*}
is a hyperlogarithm of the type $\cM_{0,5}$ of the main variable $z_1$, 
and the iterated integral 
\begin{align*}
    L\big( \theta^{(2)}_{1 \otimes 2}(W'') \,;\, z_2 \big) := 
     \int_0^{z_2} \ \theta^{(2)}_{1 \otimes 2}(W'') \quad ( W''\in \cW_{s''}^0(X_2,X_{22}))
\end{align*}
is a multiple polylogarithm of the variable $z_2$.

\subsection{Five term relation of dilogarithms}

Let $\tau=(23)(45)$. Define an automorphism $\tau: \overline{\cM}_{0,5} \to \overline{\cM}_{0,5}$ as follows: 
For a point $P=[a_1,a_2,a_3,a_4,a_5]$, set 
\begin{align}
     \tau(P)=[a_{\tau^{-1}(1)},a_{\tau^{-1}(2)},a_{\tau^{-1}(3)},a_{\tau^{-1}(4)},a_{\tau^{-1}(5)}]. 
\end{align}
By this automorphism, the pentagon 1
is transformed to the pentagon 4 like as in the following figure. 

\vspace{20mm}
\hspace{20mm}
\begin{picture}(0,5)(0,-2)
\footnotesize
\put(0,0){\includegraphics{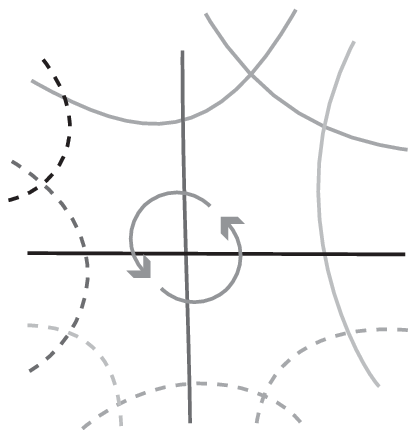}}
\put(4.2,1.7){$D_{23}$}
\put(1.7,-0.3){$D_{45}$}
\put(4.3,2.7){$D_{35}$}
\put(3.5,4){$D_{14}$}
\put(2.7,4.4){$D_{12}$}
\put(0,4){$D_{34}$}
\put(-0.5,2.7){$D_{15}$}
\put(3.1,-0.1){$D_{13}$}
\put(-0.4,1.1){$D_{24}$}
\put(4.3,0.9){$D_{25}$}
\put(2.5,2,7){$1$}
\put(1.1,1.1){$4$}
\put(1.2,2.4){$\tau$}
\put(2.4,1.1){$\tau$}
\end{picture}

\vspace{-10mm}

Let $\tau^*(x_i)=x_{\tau(i)} \quad (i=1,2,3,4,5)$, which induces an automorphism of the rational 
function field of $\overline{\cM}_{0,5}$, in particular, the cubic coordinates are transformed
as follows:
\begin{align}
    \tau^*(z_1,z_2) = \left(\frac{-z_1(1-z_2)}{1-z_1}, \, \frac{-z_2(1-z_1)}{1-z_2} \right).
\end{align}
Define the pull-back of $\omega_{ij}$ by $\tau^*(\omega_{ij})=d\log(\tau^*(x_i)-\tau^*(x_j))$,
and let $\tau_*: \fX \!\to\! \fX$ be an automorphism defined by 
\begin{align*}
     \sum_{1\le i<j\le 5}\omega_{ij}\tau_*(\varOmega_{ij})=\sum_{1\le i<j\le 5}\tau^*(\omega_{ij})\varOmega_{ij},
\end{align*}
namely, $\tau_*(\varOmega_{ij})=\varOmega_{\tau^{-1}(i),\tau^{-1}(j)}$.
Then we have
\begin{align*}
          \varOmega=(\tau^*\otimes \tau_*^{-1})(\varOmega)=\sum_{1\le i<j\le 5}
                                          \tau^*(\omega_{ij})\tau_*^{-1}(\varOmega_{ij}).
\end{align*}

Explicit representations of the pull-back $\tau^*$ and the induced automorphism $\tau_*$ are 
given as follows:
\begin{align}
\begin{cases}
     \tau^*(\xi_1)=\xi_1+\xi_{11}-\xi_{22}, \quad \tau^*(\xi_1)=-\xi_1+\xi_{12}, \\
     \tau^*(\xi_2)=-\xi_{11}+\xi_2+\xi_{22}, \quad \tau^*(\xi_{22})=-\xi_{22}+\xi_{12}, \quad \tau^*(\xi_{12})=\xi_{12},
\end{cases}
\end{align}
and 
\begin{align}
\begin{cases}
     \tau_*(X_1) = X_1, \quad  \tau_*(X_{11})=X_1-X_{11}-X_2, \quad \tau_*(X_2)=X_2 \\
     \tau_*(X_{22})=-X_1+X_2-X_{22}, \quad  \tau_*(X_{12})=X_{11}+X_{22}+X_{12}.
\end{cases} 
\end{align}

Since $\ds \varOmega=(\tau^*\otimes \tau_*^{-1})(\varOmega)$,
\begin{align*}
  \tcL(z_1,z_2)&=(\tau^*\otimes \tau_*^{-1})(\cL(z_1,z_2)) \\
               &=\cL(\tau^*(z_1),\tau^*(z_2))\Big|_{X \to \tau_*^{-1}X,\;\; X=X_1,X_{11},X_2,X_{22},X_{12}}
\end{align*}
is also a fundamental solution of \eqref{KZE2} which has the asymptotic condition
\begin{align}\label{eq:asymptotic}
\tcL(z_1,z_2) \sim \bunit \left(\frac{-z_1(1-z_2)}{1-z_1}\right)^{X_1} 
         \left(\frac{-z_2(1-z_1)}{1-z_2}\right)^{X_2} \quad  (z_1,z_2) \to (0,0).
\end{align}
Therefore the connection relation of $\cL(z_1,z_2)$ and $\tcL(z_1,z_2)$ is 
\begin{align}\label{connection_dilog1}
      \tcL(z_1,z_2) &= \cL(z_1,z_2) \exp(-\sgn(\Image z_1) \, \pi i X_1) \, 
      \exp(-\sgn(\Image z_2)\, \pi i X_2).
\end{align}
It is convenient to rewrite this formula as follows:
\begin{align}\label{eq:connection_dilog}
       (\tau^*\otimes \id)(\cL(z_1,z_2)) &= (\id\otimes \tau_*^{-1})(\cL(z_1,z_2)) \notag \\
       &\hspace{5mm} \times \exp(-\sgn(\Image z_1)\, \pi i X_1)\exp(-\sgn(\Image z_2)\, \pi i X_2).
\end{align}
Using \eqref{eq:itLs1} and \eqref{eq:itLs2} in Proposition \ref{prop:itLs}, we have the following proposition.

\begin{prop}
\begin{enumerate}
 \item From the coefficients of $[X_1,X_{11}]$ of the both sides of \eqref{eq:connection_dilog}, we have
\begin{align}\tag{L1}\label{eq:landen1}
    \Li_2\left(\frac{-z_1(1-z_2)}{1-z_1}\right)&= 
      \Li_{1,1}(1,1;z_1,z_2)-\Li_2(z_1)-\Li_{1,1}(z_1)  \\
     & \hspace{20mm}  +\Li_2(0,1;z_1,z_2). \nonumber 
\end{align}
\item From the coefficients of $[X_2,X_{22}]$ of both sides of \eqref{eq:connection_dilog}, we have
\begin{align}\tag{L2}\label{eq:landen2}
        \Li_2\left(\frac{-z_2(1-z_1)}{1-z_2}\right)
        &= -\Li_{1,1}(1,1;z_1,z_2)-\Li_2(z_2)-\Li_{1,1}(z_2) \\
        & \hspace{20mm} +\Li_1(z_2)\Li_1(z_1). \nonumber 
\end{align}
\end{enumerate}
\end{prop}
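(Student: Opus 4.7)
The plan is to expand the grouplike identity \eqref{eq:connection_dilog} as a formal power series in the free-algebra generators $X_1,X_{11},X_2,X_{22},X_{12}$ and equate coefficients of $[X_1,X_{11}]$ for (L1) and of $[X_2,X_{22}]$ for (L2). Since both sides of \eqref{eq:connection_dilog} are degree-graded, it suffices to work in degree $2$; and since the relevant brackets $[X_i,X_{ii}]$ are nonzero modulo the Ihara relations \eqref{IR2}, the extraction of these coefficients is well-defined.

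First I would compute the degree $2$ part $\cL_2(z_1,z_2)$ explicitly, using $\cL=\hcL\cdot z_1^{X_1}z_2^{X_2}$ together with the formula \eqref{eq:Ls} for $\hcL_s$. For $s=2$, the integrand $\bigl(\ad(\varOmega_0)+\mu(\varOmega')\bigr)^2(\bnull\otimes\bunit)$ expands into a finite sum of iterated integrals indexed by ordered pairs of one-forms, each multiplying a definite degree $2$ word in the $X$'s. Using the two compatible expressions \eqref{eq:itLs1}--\eqref{eq:itLs2} from Proposition \ref{prop:itLs} (choosing whichever path is convenient for a given monomial), the relevant coefficients are read off as $\C$-linear combinations of $\Li_2(z_j)$, $\Li_{1,1}(z_j)$, $\Li_{1,1}(1,1;z_1,z_2)$, $\Li_2(0,1;z_j,z_k)$, $\Li_1(z_j)$ and elementary $\log$ products coming from $z_j^{X_j}$. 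In particular, the coefficient of $X_1X_{11}$ minus that of $X_{11}X_1$ gives the coefficient of $[X_1,X_{11}]$ in $\cL_2$.

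Next I would transform each side of \eqref{eq:connection_dilog}. Under $\tau^*\otimes\id$, the arguments of every polylog are replaced by $\tau^*(z_1)=-z_1(1-z_2)/(1-z_1)$ and $\tau^*(z_2)=-z_2(1-z_1)/(1-z_2)$, producing exactly the dilogarithm on the LHS of (L1) (resp.\ (L2)) from the $\Li_2(z_1)$ (resp.\ $\Li_2(z_2)$) contribution. On the right, because $\tau=(2\,3)(4\,5)$ is an involution we have $\tau_*^{-1}=\tau_*$, and the explicit action combined with \eqref{IR2} yields
\begin{align*}
[\tau_*(X_1),\tau_*(X_{11})]&=[X_1,\,X_1-X_{11}-X_2]=-[X_1,X_{11}],\\
[\tau_*(X_2),\tau_*(X_{22})]&=[X_2,\,-X_1+X_2-X_{22}]=-[X_2,X_{22}].
\end{align*}
This sign flip, together with the contributions of the correction factors $\exp(-\sgn(\Image z_j)\,\pi i X_j)$ to the $X_1X_{11}$- and $X_{11}X_1$-coefficients, determines the RHS of (L1)/(L2) once all terms are collected.

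The main obstacle will be the combinatorial bookkeeping: several degree $2$ monomials contribute to the $[X_1,X_{11}]$-coefficient after $\tau_*$ is applied, including terms originating from $X_{12}$-monomials in $\hcL_2$ (because $\tau_*(X_{12})=X_{11}+X_{22}+X_{12}$ produces an $X_{11}$-component), and the $\pi i$ pieces coming from the monodromy exponentials must be tracked with the correct sign of $\Image z_j$. However, since (L1) and (L2) are single-valued relations among real analytic functions on the pentagon's real locus, the imaginary contributions must cancel identically by analytic continuation, which provides a useful check. I would further exploit the symmetry $\tau$: deriving (L2) from (L1) by interchanging the roles of $z_1\leftrightarrow z_2$ and $X_j\leftrightarrow X_{\sigma(j)}$ reduces the work to a single bracket computation.
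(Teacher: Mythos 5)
Your plan follows the paper's own (largely implicit) argument: the paper likewise obtains (L1) and (L2) by expanding both sides of \eqref{eq:connection_dilog} in degree $2$ via the two iterated-integral representations \eqref{eq:itLs1}--\eqref{eq:itLs2} of Proposition \ref{prop:itLs} and reading off the $[X_1,X_{11}]$- and $[X_2,X_{22}]$-coefficients, and your key structural observations ($\tau_*^{-1}=\tau_*$, the sign flips $[\tau_*(X_i),\tau_*(X_{ii})]=-[X_i,X_{ii}]$ via \eqref{IR2}, and the extra contributions from $\tau_*(X_{12})$ and from the monodromy exponentials) all check out against the paper's stated formulas. Since the paper itself records no more detail than this, your outline is a correct account of the intended proof.
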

The formula \eqref{eq:landen1} is a two dimensional analogue of
Landen's formula of dilogarithms \cite{Le}:
\begin{align*}
         \Li_2\left(\frac{-z}{1-z}\right)=-\Li_2(z)-\frac{1}{2}\log^2(1-z).
\end{align*}

Since $\Li_2(0,1;z_1,z_2)=\Li_2(z_1z_2)$ and $\Li_{1,1}(z_1)=\frac{1}{2}\log^2(1-z_1)$, 
$\eqref{eq:landen1} + \eqref{eq:landen2}$ implies the following:

\begin{thm}
We have the five term relation for dilogarithms \cite{Le}:
\begin{align}\tag{5TR}\label{eq:5termsdilog}
\Li_2(z_1z_2) & =\Li_2\left(\frac{-z_1(1-z_2)}{1-z_1}\right) + \Li_2\left(\frac{-z_2(1-z_1)}{1-z_2}\right)
            +\Li_2(z_1)+\Li_2(z_2) \notag \\
          & \hspace{20mm} +\frac{1}{2}\log^2\left(\frac{1-z_1}{1-z_2}\right). \notag
\end{align}
\end{thm}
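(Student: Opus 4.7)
The plan is to add the two Landen-type identities \eqref{eq:landen1} and \eqref{eq:landen2} and reduce the result to elementary functions. The key observation is that these formulas were engineered, by extracting coefficients of $[X_1,X_{11}]$ and $[X_2,X_{22}]$ from the same connection relation \eqref{eq:connection_dilog}, so that the awkward double polylogarithm $\Li_{1,1}(1,1;z_1,z_2)$ appears in them with opposite signs and cancels.

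First I would form the sum of the right-hand sides of \eqref{eq:landen1} and \eqref{eq:landen2}. After the $\pm\Li_{1,1}(1,1;z_1,z_2)$ terms cancel, what remains is
\[
\Li_2(0,1;z_1,z_2)-\Li_2(z_1)-\Li_2(z_2)-\Li_{1,1}(z_1)-\Li_{1,1}(z_2)+\Li_1(z_1)\Li_1(z_2).
\]
Next, from the series definition \eqref{eq:MPL2} one reads off $\Li_2(0,1;z_1,z_2)=\sum_{n\geq1}(z_1z_2)^n/n^2=\Li_2(z_1z_2)$, which is precisely the $\Li_2$ on the left of \eqref{eq:5termsdilog}.

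I would then rewrite the remaining one-variable pieces using the elementary identities $\Li_1(z)=-\log(1-z)$ and $\Li_{1,1}(z)=\tfrac12\log^2(1-z)$; the latter is the simplest instance of the shuffle relation $\xi_1\sh\xi_1=2\,\xi_1\xi_1$ applied under the iterated-integral map. Completing the square yields
\[
\Li_1(z_1)\Li_1(z_2)-\Li_{1,1}(z_1)-\Li_{1,1}(z_2)=-\tfrac12\bigl(\log(1-z_1)-\log(1-z_2)\bigr)^{2}=-\tfrac12\log^2\!\left(\frac{1-z_1}{1-z_2}\right).
\]
Substituting back and solving for $\Li_2(z_1z_2)$ produces \eqref{eq:5termsdilog}.

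No genuine obstacle is expected at this last step: all the analytic content (the decomposition theorem, the connection relation \eqref{eq:connection_dilog}, and the coefficient extraction that produces \eqref{eq:landen1} and \eqref{eq:landen2}) has already been established. The only thing one must recognise is that the two-variable MPL $\Li_2(0,1;z_1,z_2)$ is exactly $\Li_2(z_1z_2)$; once that identification is made, the five-term relation is an algebraic consequence of summing the two Landen identities.
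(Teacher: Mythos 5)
Your proposal is correct and follows exactly the paper's route: the paper proves the theorem by adding \eqref{eq:landen1} and \eqref{eq:landen2}, cancelling the $\Li_{1,1}(1,1;z_1,z_2)$ terms, and using the identifications $\Li_2(0,1;z_1,z_2)=\Li_2(z_1z_2)$ and $\Li_{1,1}(z)=\tfrac12\log^2(1-z)$. Your version simply spells out the completion of the square that the paper leaves implicit; the computations all check out.
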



\begin{thebibliography}{AA}
\bibitem[A]{A}
 {V.I.Arnold},
 {The cohomology ring of the colored braid group},
 {Mat. Zametki {\bf 5} (1969), pp. 227-231: Math. Notes {\bf 5} (1969), pp. 138-140}.
\bibitem[B]{B}
 {F.Brown},
 {Multiple zeta values and periods of moduli spaces $\overline{\fM}_{0,n}(\R)$},
 Ann. Sci. \'Ec. Norm. Sup\'er. (4) 42 (2009), no. 3, 371-489.
\bibitem[C1]{C1}
{K.T.Chen},
{Algebras of iterated path integrals and fundamental groups}, 
Transaction of the American Math. Soc. {\bf 156} (1971), 359--379. 
\bibitem[C2]{C2}
 {K.T.Chen},
 {Reduced bar constructions on de Rham complexes},
 Algebra, topology, and category theory (a collection of papers in honor of Samuel Eilenberg),
  pp. 19--32. Academic Press, New York, 1976. 
\bibitem[Dr]{Dr}
 {V.G.Drinfel'd},
 {On quasitriangular quasi-Hopf algebras and on a group that is closely 
 connected with ${\rm Gal}(\overline{\Q}/\Q)$},
 Algebra i Analiz 2 (1990), no. 4, 149--181; translation in Leningrad Math. J. {\bf 2} (1991), 
 no. 4, 829--860.
\bibitem[Ha]{Ha}
 {R.M.Hain},
 {On a generalization of Hilbert's 21st problem},
 Ann. scient.\'Ec. Norm. Sup., $4^e$ s\'erie, t. 19, (1986), 609--627.
\bibitem[HPH]{HPH}
{Hoang Ngoc Minh, M. Petitot, and J. van der Hoeven},
{L'alg\`ebre des polylogarithmes par les s\'eries g\'en\'eratrices.}
Proc. of FPSAC'99, 11-th International Conference of Formal Power Series and Algebraic Combinatrics, 
Barcelona, June 1999.
\bibitem[I]{I}
 {Y.Ihara},
 {Automorphisms of pure sphere braid groups and Galois representations},
 The Grothendieck Festschrift, Vol. II, 353--373, Progr. Math., 87, Birkh\"auser Boston, Boston, MA, 1990. 
\bibitem[IKZ]{IKZ}
{K.Ihara, M.Kaneko and D.Zagier},
{Derivation and double shuffle relations for multiple zeta values},
Compos. Math. 142 (2006), no. 2, 307--338.
\bibitem[Le]{Le}
{L.Lewin},
{The Evolution of the Ladder Concept},
Structural Properties of Polylogarithms (ed. by L.Lewin), Math. Surveys and Monographs, {\bf 37}, 
Amer. MAth. Soc. (1991),1--23.
\bibitem[Ok]{Ok}
 {J.Okuda},
 Duality Formulas of Special Values of Multiple Polylogarithms, Bull. London Math. Soc. (2004)
\bibitem[OkU]{OkU}
 {J.Okuda and K.Ueno},
 {The Sum Formula for Multiple Zeta Values},
 Zeta Functions, Topology and Quantum Physics, Developments in Mathematics 14, 
 ed. by T. Aoki et al., Springer-Verlag (2005), 145-170.
\bibitem[OU]{OU}
{S.Oi and K.Ueno},
{KZ equation on the moduli space ${\mathcal M}_{0,5}$ and the harmonic product of multiple polylogarithms},
preprint (2011) arXiv:math.QA/0910.0718v3.
\bibitem[R]{R}
 {C.Reutenauer},
 Free Lie Algebras,
 Oxford Science Publications,1993.
\bibitem[Y]{Y}
 {M.Yoshida}, 
 {Hypergeomtric Functions, My Love},
 Aspects of Mathematics, E 32, Vieweg, 1997. 
\bibitem[W]{W}
 {Z.Wojtkowiak}, 
 {Monodromy of iterated integrals and non-abelian unipotent periods},
 Geometric Galois actions, 2, 219--289, London Math. Soc. Lecture Note Ser., 243, 
 Cambridge Univ. Press, Cambridge, 1997.
\end{thebibliography}
\end{document}